\documentclass[12pt]{amsart}
\usepackage{amsmath,amssymb}
\usepackage{amsfonts}
\usepackage{amsthm}
\usepackage{latexsym}
\usepackage{graphicx}

%%%%%%%%%%%%%%%%%%%%%%%%%%%%%%%%%%%%%%%%%%%%%%%%%%%%%%%%%%%%%%%%%%%%%%%%%%%%%%%%%%%%%%%%%%

\def\p{\partial}

\def\R{\mathbb{R}}

\def\vv<#1>{\langle#1\rangle}

\def\1{\mathbf{1}}

\def\XXint#1#2{\setbox0=\hbox{$#1{#2}{\int}$}{#2}\kern-.5\wd0 }

\def\XXint#1#2#3{{\setbox0=\hbox{$#1{#2#3}{\int}$}
     \vcenter{\hbox{$#2#3$}}\kern-.5\wd0}}

  % Cauchy

 % GOOD
 % BAD

\def\vv<#1>{{\left\langle#1\right\rangle}}

\def\CD{{\rm CD}}
\def\Deg{{\rm Deg}}
%%%%%%%%%%%%%%%%%%%%%%%%%%%%%%%%%%%%%%%%%%%%%%%%%%%%%%%%%%%%%%%%%%%%%%%%%%%%%%%%%%%%%%%%%%
\newtheorem{thm}{Theorem}[section]

\newtheorem{lem}{Lemma}[section]

\theoremstyle{definition}

\theoremstyle{remark}

\numberwithin{equation}{section}

\begin{document}
\title{A Lichnerowicz-type estimate for Steklov eigenvalues on graphs and its rigidity}

\author{Yongjie Shi$^1$}
\address{Department of Mathematics, Shantou University, Shantou, Guangdong, 515063, China}
\email{yjshi@stu.edu.cn}
\author{Chengjie Yu$^2$}
\address{Department of Mathematics, Shantou University, Shantou, Guangdong, 515063, China}
\email{cjyu@stu.edu.cn}
\thanks{$^1$Research partially supported by NNSF of China with contract no. 11701355. }
\thanks{$^2$Research partially supported by NNSF of China with contract no. 11571215.}
\renewcommand{\subjclassname}{%
  \textup{2010} Mathematics Subject Classification}
\subjclass[2010]{Primary 05C50; Secondary 39A12}
\date{}
\keywords{Steklov eigenvalue, Bakry-\'Emery curvature, Lichnerowicz estimate}
\begin{abstract}
In this paper, we obtain a Lichnerowicz-type estimate for the first Steklov eigenvalues on graphs and discuss its rigidity.
\end{abstract}
\maketitle\markboth{Shi \& Yu}{Lichnerowicz-type estimate for Steklov eigenvalues on graphs}
\section{Introduction}
Let $(M^n,g)$ be a closed Riemannian manifold with Ricci curvature not less than a positive constant $K$. Then, its first positive eigenvalue for the Laplacian operator must be no less than $\frac{nK}{n-1}$. This is a well-known result obtained by Lichnerowicz \cite{LI} which is now called the Lichnerowicz estimate. This result was later extended to the first Dirichlet eigenvalue for compact Riemannian manifolds with mean convex boundary by Reilly \cite{RE} via his famous integral formula which is now called the Reilly formula. For Steklov eigenvalues on compact Riemannian manifolds with boundary, there is a conjecture by Escobar
\cite{ES} in a similar spirit which was partially solved by Xia-Xiong \cite{XX} recently.

In the discrete setting, similar theory was developed. Two notions of Ricci curvature lower bounds for graphs, the Bakry-\'Emery curvature from the diffusion process point of view and the Ollivier curvature from the optimal transport point of view, were introduced and developed in \cite{BE,OL,LY,LLY,CLP, MW}. It is really interesting that classical results in the smooth setting such as Lichnerowicz estimate (\cite{OL,BCLL,KKRT,LLY,LMP2}), Bonnet-Myer diameter estimate (\cite{LLY,CK,LMP,LMP2}), Li-Yau gradient estimate (\cite{BH}) etc. all has its corresponding versions in the discrete setting.

Motivated by all these works, we consider Lichnerowicz-type estimates for Steklov eigenvalues on graphs in this paper. Let's recall some preliminaries before stating the main results of the paper.

Let $G$ be a graph with the set of vertices denoted by $V(G)$ and the set of edges denoted by $E(G)$. We will also write $V(G)$ and $E(G)$ as $V$ and $E$ for simplicity if no confusion was made. Without further indications, throughout this paper, graphs are assumed to be finite, simple and connected. For $A,B\subset V$, we will use $E(A,B)$ to denote the set of edges in $G$ joining a vertex in $A$ to a vertex in $B$.

A triple $(G,m,w)$ is called a weighted graph where $m$ and $w$ are positive functions on $V$ and $E$ which are called the vertex measure and edge weight respectively. For simplicity, the edge weight $w$ is also viewed as a symmetric function on $V\times V$ such that for any $x,y\in V$, $w(x,y)$ is the weight of the edge $\{x,y\}$ when $\{x,y\}\in E$, otherwise $w(x,y)=0$. We will also write $m(x)$ as $m_x$ and $w(x,y)$ as $w_{xy}$ for simplicity. For each $x\in V$, we call
\begin{equation}
\Deg(x):=\frac{1}{m_x}\sum_{y\in V}w_{xy}
\end{equation}
the weighted degree of $x$. Moreover, for any $S\subset V$, we call
\begin{equation}
V_S:=\sum_{x\in S}m_x
\end{equation}
the volume of $S$.

Some special weights are attracting special interests when studying the structure of graphs. For example, the unit weight with $m\equiv 1$ and $w\equiv 1$ is usually considered when considering combinatorial structure of graphs. The normalized weight with $\Deg(x)=1$ for any $x\in V$ is usually considered when considering random walks on graphs.

For $u\in \R^V$, the Laplacian of $u$ is defined as
\begin{equation}
\Delta u(x)=\frac{1}{m_x}\sum_{y\in V}(u(y)-u(x))w_{xy}
\end{equation}
for any $x\in V$. The differential of $u$ is a skew-symmetric function on $V\times V$ defined as:
\begin{equation}
du(x,y)=\left\{\begin{array}{ll}u(y)-u(x)&\{x,y\}\in E\\0&\mbox{otherwise.}\end{array}\right.
\end{equation}
In fact, as mentioned in \cite{SY}, skew-symmetric functions $\alpha$ on $V\times V$ such that $\alpha(x,y)=0$ whenever $x\not\sim y$ are called $1$-forms of $G$. The space of $1$-forms on $G$ is denoted as $A^1(G)$. A natural inner product on $A^1(G)$ is defined as:
\begin{equation}
\vv<\alpha,\beta>=\sum_{\{x,y\}\in E}\alpha(x,y)\beta(x,y)w_{xy}=\frac12\sum_{x,y\in V}\alpha(x,y)\beta(x,y)w_{xy}
\end{equation}
for any $\alpha,\beta\in A^1(G)$. Note that functions on $V$ can be viewed as $0$-forms. So the space $A^0(G)$ of $0$-forms on $G$ is just $\R^V$. The natural inner product on $A^0(G)$ is:
\begin{equation}
\vv<u,v>=\sum_{x\in V}u(x)v(x)m_x
\end{equation}
for any $u,v\in A^0(G)$. Let $d^*:A^1(G)\to A^0(G)$ be the adjoint operator of $d:A^0(G)\to A^1(G)$ with respect to the two natural inner products defined on $A^0(G)$ and $A^1(G)$ respectively. Then, by direct computation, it is not hard to see that
\begin{equation}
\Delta=-d^*d.
\end{equation}
This is the same as the smooth case. From this point of view, it is clear that
\begin{equation}\label{eq-integration-by-part}
\vv<\Delta u,v>=-\vv<du,dv>
\end{equation}
for any $u,v\in \R^V$. This implies that $-\Delta$ is a nonnegative self-adjoint operator on $\R^V$. Let
\begin{equation}
0=\mu_1<\mu_2\leq \cdots \leq \mu_{|V|}
\end{equation}
be the eigenvalues of $-\Delta$ on $(G,m,w)$. It is clear that $\mu_1=0$ because constant functions are the corresponding eigenfunctions. Moreover, $\mu_2>0$ because we always assume that $G$ is connected.

We next recall the notion of Bakry-\'Emery curvature on graphs. A weighted graph $(G,m,w)$ is said to  satisfy the Bakry-\'Emery curvature-dimension condition $\CD(K,n)$ if for any $f\in \R^V$ and $x\in V$,
\begin{equation}\label{eq-CD}
\Gamma_2(f,f)(x)\geq \frac{1}{n}(\Delta f)^2(x)+K\Gamma(f,f)(x).
\end{equation}
Here $K$ is a real number and $n$ is positive and can be taken to be $\infty$. Moreover,
\begin{equation}\label{eq-Gamma-def}
\begin{split}
\Gamma(u,v):=\frac12\left(\Delta (uv)-v\Delta u-u\Delta v\right)
\end{split}
\end{equation}
and
\begin{equation}\label{eq-Gamma2}
\Gamma_2(u,v):=\frac{1}2\left(\Delta\Gamma(u,v)-\Gamma(\Delta u,v)-\Gamma(u,\Delta v)\right)
\end{equation}
for any $u,v
\in \R^V$. By direct computation,
\begin{equation}\label{eq-Gamma}
\begin{split}
\Gamma(u,v)(x)=&\frac1{2m_x}\sum_{y\in V}(u(x)-u(y))(v(x)-v(y))w_{xy}\\
=&\frac{1}{2m_x}\vv<du,dv>_{E_x}
\end{split}
\end{equation}
Here $E_x$ means the set of edges in $G$ adjacent to $x$ and for a set $S$ of edges in $G$,
\begin{equation}
\vv<\alpha,\beta>_S:=\sum_{\{x,y\}\in S}\alpha(x,y)\beta(x,y)w_{xy}
\end{equation}
for any $\alpha,\beta\in A^1(G)$.

The following Lichnerowicz-type estimate on graphs was shown independently by several authors \cite{BCLL,KKRT}.
\begin{thm}\label{thm-Lich}
Let $(G,m,w)$ be a connected weighted finite graph satisfying the Bakry-\'Emery curvature-dimension condition $\CD(K,n)$ with $K>0$ and $n>1$. Then
\begin{equation}\label{eq-Lichnerowicz}
  \mu_2(G)\geq \frac{nK}{n-1}.
\end{equation}
\end{thm}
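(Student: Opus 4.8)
The plan is to mimic the classical Lichnerowicz argument: take an eigenfunction $f$ for $\mu_2$, i.e. $-\Delta f = \mu_2 f$, and integrate the curvature-dimension inequality \eqref{eq-CD} over $V$ against the vertex measure $m$. The left-hand side produces $\langle \Gamma_2(f,f), \1\rangle$, which by the definition \eqref{eq-Gamma2} and the self-adjointness/integration-by-parts identity \eqref{eq-integration-by-part} collapses nicely. Specifically, summing $\Delta(\text{anything})$ against $m$ gives zero, so $\langle \Gamma_2(f,f),\1\rangle = -\langle \Gamma(\Delta f, f),\1\rangle$; and since $-\Gamma(u,v)$ integrates (against $m$) to $\langle \Delta u, v\rangle = -\langle du, dv\rangle$ up to the factor coming from \eqref{eq-Gamma} and \eqref{eq-integration-by-part}, one gets $\langle \Gamma_2(f,f),\1\rangle = \langle (\Delta f)^2, \1\rangle = \mu_2^2 \|f\|^2$.

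Next I would handle the two terms on the right-hand side of \eqref{eq-CD}. Integrating $\frac1n (\Delta f)^2$ against $m$ gives $\frac1n \mu_2^2 \|f\|^2$. Integrating $K\,\Gamma(f,f)$ against $m$ gives, via \eqref{eq-Gamma} and \eqref{eq-integration-by-part}, $K\langle df, df\rangle = -K\langle \Delta f, f\rangle = K\mu_2 \|f\|^2$. Putting the three pieces together yields
\begin{equation}
\mu_2^2 \|f\|^2 \geq \frac1n \mu_2^2 \|f\|^2 + K\mu_2 \|f\|^2,
\end{equation}
and since $\mu_2 > 0$ and $\|f\|^2 > 0$ (as $f$ is a nonconstant eigenfunction), dividing through gives $\mu_2 \geq \frac{n}{n-1}K$, which is \eqref{eq-Lichnerowicz}. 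Note this also needs $n>1$ so that $1 - \frac1n = \frac{n-1}{n} > 0$.

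The only genuinely delicate point is the bookkeeping in the first paragraph: verifying that $\langle \Gamma_2(f,f), \1\rangle = \|\Delta f\|^2$ when $-\Delta f = \mu_2 f$. The cleanest route is to observe that $\langle \Delta h, \1\rangle = 0$ for every $h\in\R^V$ (immediate from \eqref{eq-integration-by-part} with $v=\1$, or directly from the skew-symmetry of the summand), so that in \eqref{eq-Gamma2} the term $\langle \Delta\Gamma(f,f),\1\rangle$ vanishes and $\langle\Gamma_2(f,f),\1\rangle = -\langle\Gamma(\Delta f,f),\1\rangle$ (using symmetry $\Gamma(u,v)=\Gamma(v,u)$). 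Then from \eqref{eq-Gamma-def}, $\langle \Gamma(u,v),\1\rangle = -\frac12\langle v\Delta u + u\Delta v,\1\rangle = -\frac12(\langle \Delta u,v\rangle + \langle u,\Delta v\rangle) = \langle du,dv\rangle$ by \eqref{eq-integration-by-part}. Hence $\langle\Gamma_2(f,f),\1\rangle = -\langle d(\Delta f), df\rangle = \langle \Delta(\Delta f), f\rangle = \langle \Delta f,\Delta f\rangle$, which equals $\mu_2^2\|f\|^2$. I expect this to be the part requiring the most care; everything after it is a one-line comparison.
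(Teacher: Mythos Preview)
Your argument is correct and is precisely the classical integration-of-$\CD(K,n)$ proof. Note, however, that the paper does not actually supply its own proof of Theorem~\ref{thm-Lich}: it is stated as a known result with references to \cite{BCLL,KKRT}. That said, the paper's proof of the Steklov analogue Theorem~\ref{thm-Lich-Steklov} follows your computation essentially line for line---$\sum_x \Gamma_2(u,u)m_x = -\vv<d\Delta u,du> = \vv<\Delta u,\Delta u>$, and $\sum_x \Gamma(u,u)m_x = \vv<du,du> = -\vv<\Delta u,u>$---with the only difference being that boundary contributions survive in that setting; so your approach is exactly what the paper would have written had it chosen to include a proof.
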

Some discussions on rigidity of \eqref{eq-Lichnerowicz} in the case $n=\infty$ can be found in \cite{LMP2}.

Finally, we need to recall some notions on Steklov eigenvalues for graphs that was introduced in \cite{HHW} and \cite{Pe} recently.

First recall the notion of graphs with boundary.  A pair $(G,B)$ is called a graph with boundary if $G$ is graph and $B\subset V(G)$ such that (i) any two vertices in $B$ are not adjacent, (ii) any vertex in $B$ is adjacent to some vertex
in $\Omega:=V\setminus B$. The set $B$ is called the vertex-boundary of $(G,B)$ and the set $\Omega$ is called the vertex-interior of $(G,B)$. An edge joining a boundary vertex and an interior vertex is called a boundary edge. Note that the notion of graphs with boundary was naturally introduced by Friedman in \cite{FR} when considering nodal domains of graphs.

Let $(G,m,w,B)$ be a weighted graph with boundary. For each $x\in \Omega$, we call
\begin{equation}
\Deg_b(x):=\frac{1}{m_x}\sum_{y\in B}w_{xy}
\end{equation}
the weighted boundary degree at $x$. For any $u\in \R^V$ and $x\in B$, define the normal derivative of $u$ at $x$ as:
\begin{equation}\label{eq-normal-derivative}
\frac{\p u}{\p n}(x):=\frac{1}{m_x}\sum_{y\in V}(u(x)-u(y))\mu_{xy}=-\Delta u(x).
\end{equation}
The reason to define this is that one has the following Green's formula which is a straight forward consequence of \eqref{eq-integration-by-part}:
\begin{equation}\label{eq-Green}
\vv<\Delta u,v>_\Omega=-\vv<du,dv>+\vv<\frac{\p u}{\p n},v>_B.
\end{equation}
Here, for any set $S\subset V$,
\begin{equation}
\vv<u,v>_S:=\sum_{x\in S}u(x)v(x)m_x.
\end{equation}
Because of \eqref{eq-Green}, similar machinery in defining Dirichlet-to-Neumann maps and Steklov eigenvalues in the smooth case can also run in the discrete case. For each $f\in \R^B$, let $u_f$ be the harmonic extension of $f$ into $\Omega$:
\begin{equation}
\left\{\begin{array}{ll}\Delta u_f(x)=0&x\in\Omega\\
u_f(x)=f(x)&x\in B.
\end{array}\right.
\end{equation}
Define the Dirichlet-to-Neumann map $\Lambda:\R^B\to \R^B$ as
\begin{equation}
\Lambda(f)=\frac{\p u_f}{\p n}.
\end{equation}
By \eqref{eq-Green}, it is clear that
\begin{equation}
\vv<\Lambda(f),g>_B=\vv<du_f,du_g>
\end{equation}
for any $f,g\in \R^B$. This implies that $\Lambda$ is a nonnegative self-adjoint operator on $\R^B$. Let
\begin{equation}
0=\sigma_1<\sigma_2\leq\cdots\leq \sigma_{|B|}
\end{equation}
be the eigenvalues of $\Lambda$. It is clear that $\sigma_1=0$ because constant functions are the corresponding eigenfunctions. Moreover, $\sigma_2>0$ because we always assume that $G$ is connected.

We are now ready to state the first main result of this paper, a Lichnerowicz-type estimate for Steklov eigenvalues on graphs.
\begin{thm}\label{thm-Lich-Steklov}
Let $(G,m,w,B)$ be a connected weighted finite graph with boundary. Suppose that $(G,m,w)$ satisfy the Bakry-\'Emery curvature-dimension condition $\CD(K,n)$ with $K>0$ and $n>1$. Then
\begin{equation}\label{eq-Lich-Steklov}
\sigma_2\geq \frac{nK}{n-1}.
\end{equation}
\end{thm}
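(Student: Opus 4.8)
The plan is to adapt the discrete Bochner/Lichnerowicz argument (Theorem \ref{thm-Lich}) to the Steklov setting, running the $\Gamma_2$-calculus on the harmonic extension rather than on a global eigenfunction. Let $f\in\R^B$ be an eigenfunction of $\Lambda$ with eigenvalue $\sigma_2$, and let $u=u_f$ be its harmonic extension, so that $\Delta u=0$ on $\Omega$ and $\frac{\p u}{\p n}=\sigma_2 f=\sigma_2 u$ on $B$. The key quantity to integrate is $\Delta\Gamma(u,u)$ over the \emph{interior} $\Omega$: on one hand, summing the $\CD(K,n)$ inequality \eqref{eq-CD} against $m_x$ over $x\in\Omega$ and using $\Delta u=0$ there gives
\begin{equation}
\sum_{x\in\Omega}\Gamma_2(u,u)(x)\,m_x\geq K\sum_{x\in\Omega}\Gamma(u,u)(x)\,m_x,
\end{equation}
since the $\frac1n(\Delta u)^2$ term vanishes on $\Omega$. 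On the other hand, by definition \eqref{eq-Gamma2}, $\Gamma_2(u,u)=\frac12\Delta\Gamma(u,u)-\Gamma(\Delta u,u)$, and $\Gamma(\Delta u,u)$ vanishes on $\Omega$ because $\Delta u\equiv 0$ there (note $\Gamma$ at a point $x$ depends only on values of its arguments at $x$ and its neighbors, but $\Gamma(\Delta u,u)(x)$ involves the product $(\Delta u(x)-\Delta u(y))(u(x)-u(y))$ — here one must be a little careful, since $\Delta u(y)$ need not vanish when $y\in B$; see the obstacle below). Modulo that point, $\sum_{x\in\Omega}\Gamma_2(u,u)\,m_x=\frac12\sum_{x\in\Omega}\Delta\Gamma(u,u)\,m_x$.

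The next step is to evaluate $\sum_{x\in\Omega}\Delta\Gamma(u,u)(x)\,m_x$ via the Green-type identity. Applying \eqref{eq-Green} with the first argument $\Gamma(u,u)$ and second argument the constant $1$ gives $\sum_{x\in\Omega}\Delta\Gamma(u,u)(x)\,m_x=\sum_{x\in B}\frac{\p \Gamma(u,u)}{\p n}(x)\,m_x=-\sum_{x\in B}\Delta\Gamma(u,u)(x)\,m_x$, i.e. $\sum_{x\in V}\Delta\Gamma(u,u)(x)\,m_x=0$ (the total Laplacian integrates to zero), so $\sum_{x\in\Omega}\Delta\Gamma(u,u)\,m_x=-\sum_{x\in B}\Delta\Gamma(u,u)\,m_x$. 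Thus the interior integral of $\Gamma_2$ is controlled by a purely boundary expression. Meanwhile $\sum_{x\in\Omega}\Gamma(u,u)(x)\,m_x=\frac12\la du,du\ra_{E(\Omega,V)}$, roughly half the Dirichlet energy of $u$ counted from interior endpoints; combined with $\la\Lambda f,f\ra_B=\la du,du\ra=\sigma_2\|f\|_B^2$ one wants to show this dominates $\frac{n-1}{n}\sigma_2\|f\|_B^2$, and on the boundary side one expands $\Delta\Gamma(u,u)(x)$ for $x\in B$ using $B$ being an independent set (so every neighbor of a boundary vertex is interior) together with $\frac{\p u}{\p n}=\sigma_2 u$ on $B$ to reproduce a multiple of $\sigma_2^2\|f\|_B^2$. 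Chaining the three relations and doing the bookkeeping should yield $\frac{n-1}{n}\sigma_2\cdot(\text{energy})\leq \sigma_2^2\cdot(\text{energy})$, hence $\sigma_2\geq\frac{nK}{n-1}$ after dividing — wait, one expects $K\cdot(\text{energy})\leq\sigma_2\cdot(\text{energy})$ type intermediate and then a second application of $\CD(K,n)$-style trace estimate on $B$ to extract the $\frac{n}{n-1}$ improvement.

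The main obstacle I anticipate is precisely the bookkeeping of boundary terms: $\Gamma(\Delta u,u)$ and $\Gamma(u,u)$ evaluated at boundary vertices do not simply vanish, and one needs the hypothesis that $B$ is an independent set and that every boundary vertex has only interior neighbors to rewrite $\frac{\p u}{\p n}(x)=\Deg(x)u(x)-\frac1{m_x}\sum_{y\sim x}u(y)w_{xy}$ cleanly. A clean way to organize this is to introduce the Reilly-type identity for graphs: integrate $\Gamma_2(u,u)$ over $\Omega$ and express it as a bulk term plus a boundary term, analogous to Reilly's formula, then specialize to harmonic $u$. The subtle quantitative point is getting the sharp constant $\frac{n}{n-1}$ rather than just $1$; this should come from applying the full $\CD(K,n)$ inequality (not just its $K\Gamma$ part) to a suitably modified function near $B$, or from a Cauchy–Schwarz step relating $(\Delta u)^2$ on boundary-adjacent interior vertices to $\Gamma(u,u)$. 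I would expect the rigidity discussion (equality case) to force equality in every $\CD(K,n)$ and Cauchy–Schwarz step, pinning down the local structure of $G$ near $B$.
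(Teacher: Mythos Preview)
Your proposal is not a complete proof: you explicitly leave open the ``subtle quantitative point'' of getting $\frac{n}{n-1}$ rather than $1$, and the intermediate claim that $\Gamma(\Delta u,u)$ vanishes on $\Omega$ is false (as you half-notice): for $x\in\Omega$ adjacent to some $y\in B$ one has $\Delta u(y)=-\sigma_2 u(y)\neq 0$ in general, so that cross term is genuinely present and must be tracked.

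The real gap is a missing (and very simple) idea: you should integrate the $\CD(K,n)$ inequality over \emph{all} of $V$, not just $\Omega$. In the discrete setting, unlike the smooth case, the Laplacian is perfectly well defined on boundary vertices, and by \eqref{eq-normal-der-Lap} one has $\Delta u|_B=-\frac{\p u}{\p n}=-\sigma_2 u|_B$. Thus the $\frac{1}{n}(\Delta u)^2$ term in \eqref{eq-CD} does \emph{not} vanish on $B$; it contributes exactly $\frac{1}{n}\sigma_2^2\vv<u,u>_B=\frac{\sigma_2^2}{n}$, and this is precisely the source of the sharp constant $\frac{n}{n-1}$. There is no need for a Reilly-type boundary formula, a Cauchy--Schwarz trick, or any ``suitably modified function near $B$''. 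Concretely, summing \eqref{eq-CD} over $V$ and using $\sum_V\Delta\Gamma(u,u)\,m=0$, $\sum_V\Gamma(\Delta u,u)\,m=\vv<d\Delta u,du>=-\vv<\Delta u,\Delta u>$, and $\sum_V\Gamma(u,u)\,m=\vv<du,du>=-\vv<\Delta u,u>$, one gets
\[
\sigma_2^2=\vv<\Delta u,\Delta u>=\sum_V\Gamma_2(u,u)\,m\ \geq\ \frac{1}{n}\vv<\Delta u,\Delta u>+K\vv<du,du>=\frac{\sigma_2^2}{n}+K\sigma_2,
\]
which is the whole proof. Your interior-only integration throws away the boundary $\frac{1}{n}(\Delta u)^2$ contribution at the outset, which is why you cannot recover the sharp constant downstream.
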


The Lichnerowicz-type estimate for Stekolov eigenvalues looks quite different with that of the smooth case (see \cite{XX}). One reason for this may be that the Bakry-\'Emery curvature-dimension condition $\CD(K,n)$ on $(G,m,w)$ is a curvature condition on the whole graph including curvature restrictions on the boundary. Another reason may be that, although  Dirichlet-to-Neumann maps on graphs are defined by the same  machinery with the smooth case, the normal derivative
\begin{equation}\label{eq-normal-der-Lap}
\frac{\p u}{\p n}=-(\Delta u)|_B
\end{equation}
is different with the smooth case where the Dirichlet-to-Neumann map is somehow like the square root of the Laplacian operator.

In this paper, we will present a direct proof to Theorem \ref{thm-Lich-Steklov}. We would like to mention that there is another simple proof by observing that
\begin{equation}
\sigma_i\geq \mu_i
\end{equation}
for $i=1,2,\cdots,|B|$ and combining Theorem \ref{thm-Lich} (see \cite{SY2} for details).

The other main results of this paper are considering  the rigidity of \eqref{eq-Lich-Steklov}. For graphs with unit weight, we have the following conclusion for rigidity of \eqref{eq-Lich-Steklov}.
\begin{thm}\label{thm-rigidity-unit}
Let $(G,B)$ be a connected finite graph with boundary, equipped with the unit weight and satisfying the Bakry-\'Emery curvature-dimension condition $\CD(K,n)$ with $K>0$ and $n>1$. Moreover, suppose that $\sigma_2=\frac{nK}{n-1}$. Then,
$|V(G)|=3$ or $4$. When $|V(G)|=3$, one has $K=\frac12$ and $n=2$. Moreover $(G,B)$ is a path with 3 vertices and with the two end points the boundary vertices. When $|V(G)|=4$, one has $n=\infty$ and $K=2$. Moreover $(G, B)$ is one of the following graphs:
\begin{enumerate}
\item $V(G)=\{1,2,3,4\}, E(G)=\{\{1,2\},\{2,3\},\{3,4\},\{4,1\}\}$ and $B=\{1,3\}$;
\item $V(G)=\{1,2,3,4\},E(G)=\{\{1,2\},\{2,3\},\{3,4\},\{4,1\},\{2,4\}\}$ and $B=\{1,3\}$.
\end{enumerate}
\end{thm}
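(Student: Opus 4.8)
The plan is to turn the equality $\sigma_2=\frac{nK}{n-1}$ into rigidity of the Bakry--\'Emery inequality, strong enough to pin down $G$ completely. First I would locate the extremal configuration by running the direct proof of Theorem \ref{thm-Lich-Steklov}: taking a $\sigma_2$-eigenfunction $f$ of $\Lambda$ with $\sum_{x\in B}f(x)m_x=0$ and its harmonic extension $u=u_f$, integration of the $\CD(K,n)$ inequality for $u$ over $V$ (using $\Delta u=0$ on $\Omega$ and $-\Delta u=\sigma_2u$ on $B$) gives $\langle\Gamma_2(u,u),1\rangle=\sigma_2^2\langle f,f\rangle_B=\langle(\Delta u)^2,1\rangle$ and $\langle\Gamma(u,u),1\rangle=\sigma_2\langle f,f\rangle_B$, so equality in \eqref{eq-Lich-Steklov} forces $\CD(K,n)$ to hold \emph{with equality for $u$ at every vertex}. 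Moreover, since $d(u-\bar u)=du$ with $u-\bar u\perp 1$, Theorem \ref{thm-Lich} gives $\frac{nK}{n-1}=\mu_2\le\langle du,du\rangle/\langle u-\bar u,u-\bar u\rangle=\sigma_2\langle f,f\rangle_B/\langle u-\bar u,u-\bar u\rangle$, while Cauchy--Schwarz together with $\sum_{x\in B}f(x)m_x=0$ gives $\langle u-\bar u,u-\bar u\rangle=\langle u,u\rangle_\Omega+\langle f,f\rangle_B-\bar u^2V_V\ge\langle f,f\rangle_B$; comparing with the left-hand side $\sigma_2$ forces every inequality to be an equality. Hence $u\equiv0$ on $\Omega$, $u$ is a $\mu_2$-eigenfunction of $-\Delta$ on all of $G$ with $\mu_2=\sigma_2=\frac{nK}{n-1}$, and evaluating $-\Delta u=\mu_2u$ at a boundary vertex (whose neighbours all lie in $\Omega$, where $u=0$) gives $\Deg(x)=\mu_2$ for every $x\in B$ with $f(x)\neq0$; in the unit-weight case $\mu_2\in\mathbb{Z}_{>0}$ and $K=\mu_2(1-\tfrac1n)$.

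Next I would exploit the pointwise equality in $\CD(K,n)$ for $u$. With the unit weight, $\Gamma(u,u)(x)$ and $\Gamma_2(u,u)(x)$ are explicit polynomials in the values of $u$ on the $2$-ball at $x$; since $u$ vanishes on $\Omega$, equals $f$ on $B$, and $\Deg\equiv\mu_2$ on $\mathrm{supp}\,f\cap B$, setting $T(x):=\sum_{y\sim x,\,y\in B}f(y)^2$ the equality becomes $\sum_{y\sim x,\,y\in\Omega}T(y)=\big(\Deg(x)-\mu_2(1+\tfrac2n)\big)T(x)$ at interior vertices and $\sum_{y\sim x}T(y)=\mu_2^2(1+\tfrac2n)f(x)^2$ at boundary vertices with $f(x)\neq0$; in particular every interior vertex with $T(x)>0$ has $\Deg(x)\ge\mu_2(1+\tfrac2n)$, with equality only if $T$ vanishes on all of its interior neighbours. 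I would then combine these identities with the plain $\CD(K,n)$ inequality tested against indicator functions (which, as $K>0$, forces small degrees at vertices with independent neighbourhood and forces triangles otherwise) and with the Bonnet--Myers-type diameter bound for $\CD(K,n)$, $K>0$, to bound the degrees and the diameter of $G$, and then, by a short case analysis, to conclude $|V(G)|\le4$, i.e. $|V(G)|\in\{3,4\}$.

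Finally I would enumerate. There are only finitely many connected graphs with boundary on $3$ or $4$ vertices; for each I would compute $\sigma_2$ by hand and impose $\Deg|_{\mathrm{supp}\,f\cap B}=\mu_2$, $K=\mu_2(1-\tfrac1n)$ and the interior identity above. On $3$ vertices only the path $1-2-3$ with $B=\{1,3\}$ can occur; its interior vertex has no interior neighbour, so $\Deg=2=\mu_2(1+\tfrac2n)$ with $\mu_2=1$ forces $n=2$ and $K=\tfrac12$, and indeed $\CD(\tfrac12,2)$ holds with $\sigma_2=1=\tfrac{nK}{n-1}$. On $4$ vertices, a direct computation of $\sigma_2$ together with test functions violating $\CD(\mu_2(1-\tfrac1n),n)$ for every finite $n$ rules out all possibilities except the two listed; for those $\mu_2=2$, the interior identity forces $n=\infty$ and $K=2$, and $\CD(2,\infty)$ holds with $\sigma_2=2$.

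The hard part will be the middle step: converting the statement ``$\CD(K,n)$ is an equality for the single function $u$ at every vertex'' into the global bound $|V(G)|\le4$. This requires bringing in the degree and diameter restrictions carried by $\CD(K,n)$ with $K>0$, and handling with care the triangles through boundary vertices (as occur in the second $4$-vertex graph), where the naive ``local star'' curvature estimate does not apply.
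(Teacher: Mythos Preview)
Your first step---forcing $u\equiv 0$ on $\Omega$, $\mu_2=\sigma_2$, and $\Deg(x)=\mu_2$ on boundary vertices where $f$ does not vanish---matches the paper's Lemma~\ref{lem-u-1} and part~(6) of Lemma~\ref{lem-u-2}. Your final enumeration on $3$ and $4$ vertices is also sound. The serious gap is the middle step.

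The information you extract from ``$\CD(K,n)$ holds with equality for $u$'' is quadratic in $u$: your function $T$ records only $\Gamma(u,u)$, and your identity $\sum_{y\sim x,\,y\in\Omega}T(y)=(\Deg(x)-\mu_2(1+\tfrac2n))T(x)$ encodes only $\Delta\Gamma(u,u)$. This is not enough to pin down $|B|$ or the adjacency pattern between $B$ and $\Omega$: it does not rule out boundary vertices with $f=0$, nor interior vertices far from $B$, nor $|B|>2$. You then hope that Bonnet--Myers and $\CD$ on indicator functions will close the gap, but neither yields $|V|\le4$ directly, and you yourself flag this as ``the hard part'' without a concrete mechanism.

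The paper instead extracts a \emph{linear} identity in $u$ from the equality case of $\CD(K,n)$: for any $x,z$ with $d(x,z)=2$,
\[
\frac{u(x)+u(z)}{2}=\frac{\sum_y u(y)\,w_{xy}w_{yz}/m_y}{\sum_y w_{xy}w_{yz}/m_y},
\]
taken from \cite{LMP2}. Since $u|_\Omega=0$, this immediately forces $u(x)=-u(z)$ for boundary $x,z$ at distance $2$, then $u\neq 0$ on all of $B$, then every boundary vertex adjacent to every interior vertex, hence $|B|=2$. A maximum-principle argument on $\Gamma(u,u)|_\Omega$ then gives $\Deg_b(x)=\tfrac{(n+2)K}{n-1}$ for every interior $x$. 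In the unit-weight case this is $2=\tfrac{(n+2)K}{n-1}$, while $|\Omega|=\Deg(1)=\tfrac{nK}{n-1}$, so $|\Omega|=\tfrac{2n}{n+2}\le 2$ and the theorem follows in two lines. What you are missing is precisely this distance-$2$ averaging identity (or an equivalent linear constraint); without it, your route through $T$, diameter bounds, and case analysis does not visibly terminate.
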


For general weighted graphs,  by trying to squeeze the curvature-dimension condition into the interior of the graph, we have the following rigidity of \eqref{eq-Lich-Steklov}.
\begin{thm}\label{thm-general}
Let $(G,m,w,B)$ be a connected weighted finite graph with boundary. Suppose that $(G,m,w)$ satisfy the Barky-\'Emery dimension-curvature condition $\CD(n,K)$ with $K>0$ and $n>1$. Then $\sigma_2=\frac{nK}{n-1}$ if and only if all following statements are true:
\begin{enumerate}
\item $|B|=2$ and every interior vertex is adjacent to both of the two boundary vertices.
\item let $B=\{1,2\}$. Then $m_1=m_2:=m$ and for $x\in\Omega$, $w_{1x}=w_{2x}:=w_x$;
\item $\Deg(1)=\Deg(2)=\frac{\sum_{x\in\Omega}w_x}{m}=\frac{nK}{n-1}$;
\item For any $x\in \Omega$, $\Deg_b(x)=\frac{2w_x}{m_x}=\frac{(n+2)K}{n-1}$;
\item Either $n=2$ and $|\Omega|=1$, or $n>2$ and  for any $x\in\Omega$ and $f\in\R^\Omega$ with $f(x)=0$,
\begin{equation}\label{eq-rigidity-n}
\begin{split}
&\Gamma^\Omega_2(f,f)(x)-\frac{1}{n-2}(\Delta_\Omega f)^2(x)+\frac{3K}{n-1}\Gamma^\Omega(f,f)(x)+\frac{(n+2)^2K^2}{8m(n-1)^2}\vv<f,f>_\Omega\\
&-\frac{(n+2)K}{(n-1)(n-2)m}\vv<f,1>_\Omega\Delta_\Omega f(x)-\frac{n(n+2)^2K^2}{8(n-2)(n-1)^2m^2}\vv<f,1>_\Omega^2\geq0.
\end{split}
\end{equation}
In particular, when $n=\infty$, the last inequality becomes
\begin{equation}\label{eq-rigidity-infty}
\begin{split}
\Gamma^\Omega_2(f,f)(x)+\frac{K^2}{8m}\vv<f,f>_\Omega
-\frac{K^2}{8m^2}\vv<f,1>_\Omega^2\geq0
\end{split}
\end{equation}
for any $f\in \R^\Omega$ with $f(x)=0$.
\end{enumerate}
Here, $\Gamma_2^\Omega$, $\Gamma^\Omega$ and $\Delta_\Omega$ are the corresponding $\Gamma_2$, $\Gamma$ and $\Delta$ operators for the induced graph on $\Omega$.
\end{thm}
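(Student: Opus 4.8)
The plan is to establish the ``if'' direction by an explicit test function, and the ``only if'' direction by a rigidity analysis of the summed Bochner-type argument behind Theorem \ref{thm-Lich-Steklov}. For the ``if'' direction, assume $\CD(K,n)$ together with (1)--(3) and write $B=\{1,2\}$. By (1) every interior vertex is adjacent to both $1$ and $2$, and by (2) with equal weights, so the function $u$ that equals $+1$ at $1$, $-1$ at $2$, and $0$ on $\Omega$ satisfies $\Delta u=0$ on $\Omega$; hence $u$ is the harmonic extension of $f:=u|_B$, and $f$ is orthogonal to the constants in $\R^B$ because $m_1=m_2$. A direct computation using (2) and (3) gives $\Lambda f=\frac1m\big(\sum_{x\in\Omega}w_x\big)f=\frac{nK}{n-1}f$, so $\sigma_2\le\frac{nK}{n-1}$; combined with the reverse inequality from Theorem \ref{thm-Lich-Steklov} this yields $\sigma_2=\frac{nK}{n-1}$. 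Conditions (4)--(5) are not used here; they appear in the statement only because, as explained below, they are forced by the ``only if'' direction.

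For the ``only if'' direction, let $f$ be a $\sigma_2$-eigenfunction of $\Lambda$ and $u=u_f$ its harmonic extension, so $\Delta u=0$ on $\Omega$, $\Delta u=-\sigma_2 u$ on $B$, and $\langle u,1\rangle_B=0$. Summing \eqref{eq-CD} against $m_x$ over $x\in V$ and using $\langle\Gamma_2(u,u),1\rangle=\langle\Delta u,\Delta u\rangle$ and $\langle\Gamma(u,u),1\rangle=\langle du,du\rangle$ (both consequences of \eqref{eq-integration-by-part} and \eqref{eq-Gamma-def}--\eqref{eq-Gamma2}), together with $\langle\Delta u,\Delta u\rangle=\sigma_2^2\langle u,u\rangle_B$ and $\langle du,du\rangle=\langle\Lambda f,f\rangle_B=\sigma_2\langle u,u\rangle_B$, recovers $\tfrac{n-1}{n}\sigma_2\ge K$, i.e.\ Theorem \ref{thm-Lich-Steklov}. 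The hypothesis $\sigma_2=\frac{nK}{n-1}$ makes this a chain of equalities, and since every summand is nonnegative we obtain the pointwise identity
\[
\Gamma_2(u,u)(x)=\tfrac1n(\Delta u)^2(x)+K\,\Gamma(u,u)(x)\qquad\text{for every }x\in V .
\]
Moreover, for each fixed $x$ the map $g\mapsto\Gamma_2(g,g)(x)-\tfrac1n(\Delta g)^2(x)-K\,\Gamma(g,g)(x)$ is a positive semidefinite quadratic form in $(g(y))_{y\in V}$ — this is exactly $\CD(K,n)$ at $x$ — so it vanishes at $g=u$ together with its polarization:
\[
\Gamma_2(u,\phi)(x)=\tfrac1n\,\Delta u(x)\,\Delta\phi(x)+K\,\Gamma(u,\phi)(x)\qquad\text{for every }x\in V,\ \phi\in\R^V .
\]

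Next I would mine this family of linear identities. Testing with indicators $\phi=\mathbf{1}_{y}$ and using $\Delta u\equiv0$ on $\Omega$ and $\Delta u=-\sigma_2 u$ on $B$, I expect to show first that $u\equiv0$ on $\Omega$ and that $u$ is supported on exactly two boundary vertices with opposite values — this is (1), and after scaling it identifies $u$ with the eigenfunction of the ``if'' part. Reading the identities at those two boundary vertices and at their common interior neighbours then yields the symmetries $m_1=m_2=:m$, $w_{1x}=w_{2x}=:w_x$ of (2), the normalisation $\Deg(1)=\Deg(2)=\frac1m\sum_{x\in\Omega}w_x=\frac{nK}{n-1}$ of (3), and, from the identity at a general interior vertex, the uniform boundary degree $\Deg_b(x)=\frac{2w_x}{m_x}=\frac{(n+2)K}{n-1}$ of (4) (the first equality being automatic once (1)--(2) hold). \emph{This step is the main obstacle}: excluding $|B|\ge3$ and controlling interior vertices that have interior neighbours both require exploiting the two-step structure of $\Gamma_2$ — which involves the full $2$-ball around each vertex — so the combinatorial bookkeeping is delicate, and the linear identities must be used exhaustively rather than only in convenient special cases.

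Finally, with (1)--(4) in hand, condition (5) is obtained by re-expressing the \emph{given} inequality $\CD(K,n)$ at an interior vertex $x$. Since $\Gamma_2(\cdot,\cdot)(x)$, $\Delta(\cdot)(x)$ and $\Gamma(\cdot,\cdot)(x)$ are translation-invariant, one may test with $g$ normalised so that $g(x)=0$; set $f=g|_\Omega$ and $s=g(1)+g(2)$. Condition (4) forces $w_x=\frac{(n+2)K}{2(n-1)}m_x$, so the boundary contributions to $\Delta g(x)$, $\Delta g(1)$, $\Delta g(2)$ become explicit multiples of $s$ and of $\langle f,1\rangle_\Omega$, and one can decompose $\Gamma_2(g,g)(x)$, $(\Delta g)(x)$, $\Gamma(g,g)(x)$ into the induced-graph quantities $\Gamma^\Omega_2(f,f)(x)$, $\Delta_\Omega f(x)$, $\Gamma^\Omega(f,f)(x)$ plus boundary terms computed from (2)--(3). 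Minimising the resulting quadratic form over the single free parameter $s$ (the $g(1)-g(2)$ direction decouples) turns $\CD(K,n)$ at $x$ into \eqref{eq-rigidity-n}, the Schur complement lowering the effective dimension from $n$ to $n-2$ — which accounts for the $\frac1{n-2}$ and the $\langle f,1\rangle_\Omega$ terms, and forces the case split at $n=2$: there the minimised form degenerates in $s$, so boundedness below makes $\Delta_\Omega f$ vanish for all admissible $f$, whence $|\Omega|=1$ by connectedness. Letting $n\to\infty$ gives \eqref{eq-rigidity-infty}. Retracing the $s$-minimisation backwards shows that, given (1)--(4), condition (5) is in fact equivalent to $\CD(K,n)$ holding at the interior vertices, which closes the equivalence.
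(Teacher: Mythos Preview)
Your overall architecture matches the paper's: the ``if'' direction via the explicit test function, the pointwise equality in \eqref{eq-CD} from the summed Bochner argument, the polarization trick, and the derivation of (5) by expressing $\CD(K,n)$ at an interior vertex in terms of interior quantities and minimising over the boundary degree of freedom --- this last part is exactly the paper's Theorem~\ref{thm-curvature}, including the $n\to n-2$ shift and the $n=2$ degeneration.

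The place you flagged as ``the main obstacle'' is indeed where your plan diverges from the paper, and it is a real gap. You propose to extract $u|_\Omega\equiv 0$ (and then $|B|=2$, the symmetries, and the degree identities) purely from the polarized identity $\Gamma_2(u,\phi)=\tfrac1n\Delta u\,\Delta\phi+K\Gamma(u,\phi)$ by testing with indicators. The paper does \emph{not} obtain $u|_\Omega\equiv0$ this way. Instead it proves a separate lemma (Lemma~\ref{lem-u-1}): translate $u$ by a constant $c$ so that $v=u+c$ is orthogonal to constants on all of $V$, note $\langle v,v\rangle_B\ge\langle u,u\rangle_B$ since $\langle u,1\rangle_B=0$, and run the chain
\[
\tfrac{nK}{n-1}=\sigma_2=\frac{\langle du,du\rangle}{\langle u,u\rangle_B}\ge\frac{\langle dv,dv\rangle}{\langle v,v\rangle_B}\ge\frac{\langle dv,dv\rangle}{\langle v,v\rangle}\ge\mu_2\ge\tfrac{nK}{n-1},
\]
the last inequality being the ordinary Lichnerowicz estimate (Theorem~\ref{thm-Lich}). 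Equality throughout forces $c=0$, $\langle v,v\rangle_\Omega=0$, i.e.\ $u|_\Omega\equiv0$, and also $\mu_2=\sigma_2$. Only \emph{after} this does the paper use polarization (in the form of the two--step identity \eqref{eq-d=2} borrowed from \cite{LMP2}) together with $u|_\Omega\equiv0$ to prove Lemma~\ref{lem-u-2}, which yields (1)--(4). Without $u|_\Omega\equiv0$ already in hand, the polarization identities at interior vertices mix boundary and interior values of $u$ in a way that makes the combinatorial argument you sketch much harder; in the paper's route this mixing simply disappears. So the missing idea you need is precisely the $\sigma_2$--versus--$\mu_2$ comparison of Lemma~\ref{lem-u-1}.

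A minor remark on the ``if'' direction: you appeal to Theorem~\ref{thm-Lich-Steklov} (hence to the hypothesis $\CD(K,n)$) for the lower bound $\sigma_2\ge\frac{nK}{n-1}$. Since $|B|=2$, the Dirichlet--to--Neumann map has exactly one nonzero eigenvalue, and your test function is its eigenfunction; so (1)--(3) alone already give $\sigma_2=\frac{nK}{n-1}$ without invoking the lower bound. This is how the paper phrases the converse.
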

By Theorem \ref{thm-general}, one can construct many weighted graphs satisfying the Bakry-\'Emery curvature-dimension condition $\CD(K,n)$ with $K>0$ and $n>1$, on which the equality of \eqref{eq-Lich-Steklov} holds. For example, first fix a complete graph $\Omega$ equipped with the vertex measure $m_\Omega$ and edge weight $w_\Omega$ such that $(\Omega, m_\Omega, w_\Omega)$ satisfies the Bakry-\'Emery curvature-dimension condition $\CD(n-2,K_\Omega)$ for some $K_\Omega>0$. Then add $B=\{1,2\}$ to $\Omega$ by joining every vertex in $B$ to every vertex in $\Omega$. Arrange the weights of every boundary edge and the measure of the boundary vertices so that (2),(3),(4) in Theorem \ref{thm-general} holds. Note that by multiplying a constant $\lambda$ to $w_\Omega$, the curvature lower bound of $\Omega$ will become $\lambda K_\Omega$. So
\begin{equation}
\Gamma^\Omega_2(f,f)(x)-\frac{1}{n-2}(\Delta_\Omega f)^2(x)\geq \lambda K_\Omega \Gamma^\Omega(f,f)(x)
\end{equation}
on $(\Omega, m_\Omega,\lambda w_\Omega)$. Moreover, note that the  negative terms in \eqref{eq-rigidity-n} are  bounded from above by  some multiples of $\Gamma^\Omega(f,f)(x)$ with the multipliers independent of $\lambda$ when $\lambda$ is large. So, when $\lambda$ is large enough, then the graph equipped with the re-scale weight will satisfy \eqref{eq-rigidity-n}, and hence equality of \eqref{eq-Lich-Steklov} holds on the graph that satisfies the Bakry-\'Emery curvature condition $\CD(K,n)$ with $K>0$ and $n>2$ by Theorem \ref{thm-general}.

Because \eqref{eq-rigidity-n} looks complicated, we first consider rigidity of \eqref{eq-Lich-Steklov} for the case that the induced graph on $\Omega$ is trivial. In this case, $$\Gamma_2^\Omega(f,f)=\Gamma^\Omega(f,f)=\Delta_\Omega f=0.$$ So, \eqref{eq-rigidity-n} becomes very simple and we are able to get a classification in this case.
\begin{thm}\label{thm-rigidity-partial}
Let $(G,m,w,B)$ be a connected weighted finite graph with boundary and $E(\Omega,\Omega)=\emptyset$. Suppose that $(G,m,w)$ satisfy the Bakry-\'Emery curvature-dimension condition $\CD(K,n)$ with $K>0$ and $n>1$, and $\sigma_2=\frac{nK}{n-1}$. Then, $|V(G)|=3$ or $4$. When $|V(G)|=3$, one has $n\geq 2$ and $(G,B)$ is a path on three vertices with the two end points as the boundary vertices. Moreover, If denote the two boundary vertices as $1$ and $2$, and the interior vertex as $x$, then $m_1=m_2:=m$, $w_{1x}=w_{2x}=\frac{mnK}{n-1}$, and $m_x=\frac{2n m}{n+2}$.
When $|V(G)|=4$, one has $n=\infty$ and $(G,B)$ is a square with two diagonal vertices as the boundary vertices. Moreover,  if denote the two boundary vertices as $1$ and $2$, and the interior vertices as $x$ and $y$, then $m_1=m_2=m_x=m_y:=m$ and $w_{1x}=w_{2x}=w_{1y}=w_{2y}=\frac{mK}{2}$.
\end{thm}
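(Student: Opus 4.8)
The plan is to read everything off from Theorem~\ref{thm-general}, whose hypotheses coincide with those here. Since $\sigma_2=\frac{nK}{n-1}$, conditions (1)--(5) of that theorem all hold; and since $E(\Omega,\Omega)=\emptyset$, the induced graph on $\Omega$ has no edges, so $\Gamma_2^\Omega$, $\Gamma^\Omega$ and $\Delta_\Omega$ vanish identically. First I would record the structure coming from (1)--(4): by (1), $B=\{1,2\}$ (with $1\not\sim2$) and every interior vertex is joined to both $1$ and $2$; by (2), $m_1=m_2=:m$ and $w_{1x}=w_{2x}=:w_x$ for $x\in\Omega$; and (3), (4) say $\sum_{x\in\Omega}w_x=\frac{nKm}{n-1}$ and $w_x=\frac{(n+2)K}{2(n-1)}m_x$. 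Eliminating $w_x$ between the last two identities yields the single scalar relation
\[
V_\Omega=\sum_{x\in\Omega}m_x=\frac{2nm}{n+2}
\]
(to be read as $V_\Omega=2m$ when $n=\infty$).

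Next I would mine condition (5), which is where the triviality of the interior graph pays off. If $1<n<2$, neither clause of (5) can hold, so that range is impossible; hence $n\geq2$. If $n=2$, clause (5) states outright that $|\Omega|=1$. If $2<n<\infty$, substituting $\Gamma_2^\Omega=\Gamma^\Omega=0$ and $\Delta_\Omega f=0$ into \eqref{eq-rigidity-n} and dividing by the positive constant $\frac{(n+2)^2K^2}{8m(n-1)^2}$ collapses that inequality to
\[
\vv<f,f>_\Omega\ \geq\ \frac{n}{(n-2)m}\,\vv<f,1>_\Omega^2\qquad\text{for all }f\in\R^\Omega\text{ with }f(x)=0,
\]
and when $n=\infty$, \eqref{eq-rigidity-infty} reduces to the same statement with $\frac1m$ in place of $\frac{n}{(n-2)m}$. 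I would then invoke the elementary sharp fact that for a finite weighted set $S$, the bound $\sum_{z\in S}f(z)^2m_z\geq C\bigl(\sum_{z\in S}f(z)m_z\bigr)^2$ holds for all $f$ precisely when $C\sum_{z\in S}m_z\leq1$ (Cauchy--Schwarz, with equality at constant $f$). Applied with $S=\Omega\setminus\{x\}$, the reduced inequality at $x$ becomes $m_x\geq V_\Omega-\frac{(n-2)m}{n}$ (respectively $m_x\geq V_\Omega-m$). Assuming $|\Omega|\geq2$ so that each such $S$ is nonempty, summing over $x\in\Omega$ and inserting $V_\Omega=\frac{2nm}{n+2}$ gives, after a one-line computation, $|\Omega|\leq\frac{2n^2}{n^2+4}<2$ when $n<\infty$ — a contradiction, so $|\Omega|=1$; the $n=\infty$ version gives instead $|\Omega|\leq2$.

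Finally I would read off the graph from $|\Omega|$. If $|\Omega|=1$, say $\Omega=\{x\}$, then $V(G)=\{1,2,x\}$ and (since $1\not\sim2$) the only edges are $\{1,x\},\{2,x\}$, so $(G,B)$ is the path $1-x-2$ with its two endpoints as boundary vertices; moreover $m_x=V_\Omega=\frac{2nm}{n+2}$ and $w_{1x}=w_{2x}=w_x=\frac{(n+2)K}{2(n-1)}m_x=\frac{mnK}{n-1}$, with $n\geq2$. If $|\Omega|=2$ — which by the count above forces $n=\infty$ — say $\Omega=\{x,y\}$, then $V(G)=\{1,2,x,y\}$ with edge set $\{1,x\},\{2,x\},\{1,y\},\{2,y\}$, i.e.\ the $4$-cycle $1-x-2-y-1$, a square with the two boundary vertices diagonal; here the reduced inequalities (taken at $x$ and at $y$) force $m_x,m_y\geq V_\Omega-m=m$ while $m_x+m_y=V_\Omega=2m$, so $m_x=m_y=m$, and then $w_{1x}=w_{2x}=w_{1y}=w_{2y}=w_x=\frac{mK}{2}$. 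The converse direction (that these data realize equality) is just the ``if'' part of Theorem~\ref{thm-general}.

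I do not anticipate a genuine obstacle: the real content sits in Theorem~\ref{thm-general}, and the rest is bookkeeping. The points that need care are the exact reduction of \eqref{eq-rigidity-n}/\eqref{eq-rigidity-infty} once the interior $\Gamma$-operators vanish, together with its sharp Cauchy--Schwarz reformulation; consistently passing to the limit $n\to\infty$ in the expressions $\frac{2nm}{n+2}$, $\frac{(n+2)K}{2(n-1)}$ and $\frac{mnK}{n-1}$; and disposing of the degenerate range $1<n<2$.
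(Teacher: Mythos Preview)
Your proposal is correct and follows essentially the same route as the paper: both invoke Theorem~\ref{thm-general} to reduce \eqref{eq-rigidity-n} (with $\Gamma_2^\Omega=\Gamma^\Omega=\Delta_\Omega=0$) to the constraint $V_{\Omega\setminus\{x\}}\le\frac{(n-2)m}{n}$ and then use $V_\Omega=\frac{2nm}{n+2}$ to force $|\Omega|\le1$ for $n<\infty$ and $|\Omega|\le2$ for $n=\infty$. The only cosmetic differences are that the paper extracts the volume bound by plugging in the single test function $f=\chi_{\Omega\setminus\{x\}}$ rather than via your sharp Cauchy--Schwarz reformulation, and it concludes by comparing each $m_x$ to $V_\Omega/2$ rather than summing to get $|\Omega|\le\frac{2n^2}{n^2+4}$; also, the paper leaves the exclusion of $1<n<2$ implicit in Theorem~\ref{thm-general}, whereas you make it explicit.
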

By applying the last theorem, one has the following rigidity of \eqref{eq-Lich-Steklov} for graphs with normalized weight.
\begin{thm}\label{thm-rigidity-normal}
Let $(G,m,w,B)$ be a connected finite graph with boundary equipped with a normalized weight. Suppose that $(G,m,w)$ satisfy the curvature-dimension condition $\CD(K,n)$ with $K>0$ and $n>1$ and $\sigma_2=\frac{nK}{n-1}$. Then,
$n=\infty$, $K=1$, and  $V(G)=3$ or $4$. Moreover, $(G,m,w,B)$ is the graphs listed in Theorem \ref{thm-rigidity-partial} with $n=\infty$ and $K=1$.
\end{thm}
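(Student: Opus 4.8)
The plan is to reduce Theorem~\ref{thm-rigidity-normal} to the two rigidity results already established, namely Theorems~\ref{thm-general} and~\ref{thm-rigidity-partial}. The key observation is that a normalized weight is extremely restrictive in this equality situation: it forces $n=\infty$, $K=1$, and $E(\Omega,\Omega)=\emptyset$, after which Theorem~\ref{thm-rigidity-partial} identifies the graph completely.

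First I would apply Theorem~\ref{thm-general} to the equality case $\sigma_2=\frac{nK}{n-1}$. This yields all of its conclusions; in particular $|B|=2$, and writing $B=\{1,2\}$ one has $m_1=m_2=:m$ and $w_{1x}=w_{2x}=:w_x$ for $x\in\Omega$, together with $\Deg(1)=\frac{nK}{n-1}$ (item (3)) and $\Deg_b(x)=\frac{(n+2)K}{n-1}$ for every $x\in\Omega$ (item (4)); note $\Omega\neq\emptyset$ by the very definition of a graph with boundary. Since the weight is normalized, $\Deg(y)=1$ for all $y\in V$; in particular $\frac{nK}{n-1}=\Deg(1)=1$. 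On the other hand, because the boundary edges at $x$ form a subset of all edges incident to $x$ and all weights are nonnegative, $\Deg_b(x)\le\Deg(x)=1$, hence $\frac{(n+2)K}{n-1}\le\frac{nK}{n-1}$. If $n$ were finite, then $n-1>0$ would give $(n+2)K\le nK$, i.e. $2K\le 0$, contradicting $K>0$. Therefore $n=\infty$, and in that case $\frac{nK}{n-1}$ is to be read as $K$, so $K=1$.

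Next, with $n=\infty$ and $K=1$, item (4) of Theorem~\ref{thm-general} reads $\Deg_b(x)=K=1$ for every $x\in\Omega$, while $\Deg(x)=1$; hence the interior degree $\Deg(x)-\Deg_b(x)=\frac1{m_x}\sum_{y\in\Omega}w_{xy}$ vanishes, so no interior vertex has a neighbour in $\Omega$, i.e. $E(\Omega,\Omega)=\emptyset$.

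Finally, $(G,m,w,B)$ now satisfies all hypotheses of Theorem~\ref{thm-rigidity-partial}: it is a connected weighted finite graph with boundary, $E(\Omega,\Omega)=\emptyset$, it satisfies $\CD(1,\infty)$, and $\sigma_2=1=\frac{nK}{n-1}$. Invoking that theorem gives $|V(G)|=3$ or $4$ and identifies $(G,m,w,B)$; substituting $n=\infty$ and $K=1$ into the weight data listed there (so in the $3$-vertex path case $w_{1x}=w_{2x}=m$ and $m_x=2m$, and in the $4$-vertex square case $m_1=m_2=m_x=m_y=m$ and $w_{1x}=w_{2x}=w_{1y}=w_{2y}=\frac m2$) gives exactly the asserted description, and one checks directly that these weights are indeed normalized. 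I do not expect a genuine obstacle here: the argument is a two-step reduction to the earlier rigidity theorems, and the only point requiring care is the bookkeeping of the $n$-dependent expressions from Theorems~\ref{thm-general} and~\ref{thm-rigidity-partial} in the limit $n=\infty$.
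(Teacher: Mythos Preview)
Your proposal is correct and follows essentially the same route as the paper's proof. The only cosmetic difference is that you invoke Theorem~\ref{thm-general} for the degree identities $\Deg(1)=\tfrac{nK}{n-1}$ and $\Deg_b(x)=\tfrac{(n+2)K}{n-1}$, whereas the paper cites Lemma~\ref{lem-u-2} directly; since items (3) and (4) of Theorem~\ref{thm-general} are exactly conclusions (6) and (7) of that lemma, the arguments are interchangeable.
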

For general weighted graphs, we have the following information about the structures of graphs satisfying the Bakry-\'Emery curvature-dimension condition $\CD(K,n)$ with $K>0$ and $n>1$ and that the equality of \eqref{eq-Lich-Steklov} holds derived from \eqref{eq-rigidity-n} and \eqref{eq-rigidity-infty}.

\begin{thm}\label{thm-connect}
Let $(G,m,w,B)$ be a connected weighted finite graph with boundary. Suppose that $(G,m,w)$ satisfy the Barky-\'Emery dimension-curvature condition $\CD(n,K)$ with $K>0$ and $n>1$ and $\sigma_2=\frac{nK}{n-1}$. Then,

(1) When $2<n<\infty$, the induced graph on $\Omega$ can not contained two disjoint balls of radius $2$. In particular, the induced graph on $\Omega$ must be connected and with diameter not greater than $4$.

(2) When $n=\infty$, the induced graph on $\Omega$ can not contained two disjoint balls of radius $2$ unless $(G,B)$ is a square with two diagonal vertices the boundary vertices (i.e.  the case with $|V(G)|=4$ in Theorem \ref{thm-rigidity-partial}.). In particular, if $G$ is not a square, then the induced graph on $\Omega$ must be connected and with diameter not greater than $4$.
\end{thm}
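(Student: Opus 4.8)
\noindent\textit{Sketch of proof.}
The plan is to use Theorem~\ref{thm-general} to convert the equality $\sigma_2=\frac{nK}{n-1}$ into the structural conditions (1)--(5), and then feed one very simple test function into the curvature inequality \eqref{eq-rigidity-n} (respectively \eqref{eq-rigidity-infty}). First I would record a volume identity: conditions (3) and (4) give $w_x=\tfrac{(n+2)K}{2(n-1)}m_x$ for every $x\in\Omega$, and summing this over $\Omega$ and comparing with (3) yields $V_\Omega=\tfrac{2nm}{n+2}$; consequently
\[
V_\Omega-\frac{(n-2)m}{n}=\frac{(n^2+4)m}{n(n+2)},
\]
which tends to $m$ as $n\to\infty$ (with $V_\Omega\to 2m$). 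Throughout, $B(p,r)$ denotes the ball of radius $r$ about $p$ in the induced graph on $\Omega$.

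The key step is the estimate: \emph{for every $p\in\Omega$ with $\Omega\setminus B(p,2)\neq\emptyset$ one has $V_{B(p,2)}\ge\tfrac{(n^2+4)m}{n(n+2)}$ when $2<n<\infty$, and $V_{B(p,2)}\ge m$ when $n=\infty$.} To prove it, put $x=p$ and $f=\1_{\Omega\setminus B(p,2)}$ in condition (5). Since $\Gamma^\Omega(f,f)(p)$ and $\Delta_\Omega f(p)$ involve only the values of $f$ on the $1$-ball about $p$, and $\Gamma_2^\Omega(f,f)(p)$ only the values on the $2$-ball about $p$, and $f\equiv0$ on $B(p,2)$, these three quantities all vanish; moreover $f(p)=0$. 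Writing $W:=V_{\Omega\setminus B(p,2)}=\vv<f,f>_\Omega=\vv<f,1>_\Omega>0$, inequality \eqref{eq-rigidity-n} collapses, after dividing by $\tfrac{(n+2)^2K^2}{8m(n-1)^2}>0$, to $W\bigl(1-\tfrac{n}{(n-2)m}W\bigr)\ge0$, that is $W\le\tfrac{(n-2)m}{n}$; and \eqref{eq-rigidity-infty} gives $W\le m$ in the same way. Subtracting from $V_\Omega$ gives the estimate.

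Part (1) is then immediate: if the induced graph on $\Omega$ contained two disjoint balls $B(p,2)$ and $B(q,2)$ of radius $2$, then $\Omega\setminus B(p,2)\supseteq B(q,2)\neq\emptyset$ and symmetrically, so for $2<n<\infty$ the estimate together with disjointness gives $\tfrac{2(n^2+4)m}{n(n+2)}\le V_{B(p,2)}+V_{B(q,2)}\le V_\Omega=\tfrac{2nm}{n+2}$, i.e.\ $n^2+4\le n^2$, which is absurd. The ``in particular'' assertion follows because a disconnected $\Omega$ (take centres in two distinct components) or an $\Omega$ of diameter $\ge5$ (take $p,q$ at distance $\ge5$, so that $B(p,2)\cap B(q,2)=\emptyset$ by the triangle inequality) would each produce two disjoint radius-$2$ balls.

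For part (2) the same computation with $n=\infty$ forces $V_{B(p,2)}=V_{B(q,2)}=m$ and $V_{B(p,2)}+V_{B(q,2)}=2m=V_\Omega$, hence $\Omega=B(p,2)\sqcup B(q,2)$ is a disjoint union of two radius-$2$ balls of volume $m$ (with $p\neq q$, so $|\Omega|\ge2$); it remains to show that this forces $(G,B)$ to be the square. Here I expect \eqref{eq-rigidity-infty} alone to be insufficient --- for $f$ supported inside one of the two balls it merely reproduces the Cauchy--Schwarz inequality and detects nothing about the edges there --- so one must use the curvature-dimension condition $\CD(K,\infty)$ of $G$ more directly (equivalently, the rigidity of the Lichnerowicz bound for $-\Delta$ on $G$, which also holds at equality since $\sigma_2\ge\mu_2$) in order to rule out all interior edges; once $E(\Omega,\Omega)=\emptyset$, Theorem~\ref{thm-rigidity-partial} forces $|V(G)|\in\{3,4\}$, and $|\Omega|\ge2$ then leaves only $|\Omega|=2$, i.e.\ the square. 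Establishing $E(\Omega,\Omega)=\emptyset$ --- in particular excluding the borderline configuration in which $\Omega$ is connected with $d_\Omega(p,q)=5$ --- is the step I expect to be the main obstacle; the remaining ``in particular'' statements in (2) follow as in part (1).
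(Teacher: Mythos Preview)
Your treatment of part (1) is correct and is essentially identical to the paper's argument: plug $f=\chi_{\Omega\setminus B(p,2)}$ into \eqref{eq-rigidity-n}, obtain $V_{\Omega\setminus B(p,2)}\le\frac{(n-2)m}{n}$, and compare with $V_\Omega=\frac{2nm}{n+2}$.

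The gap is in part (2). You correctly reach $\Omega=B(p,2)\sqcup B(q,2)$ with each ball of volume $m$, but then you abandon \eqref{eq-rigidity-infty}, saying it ``merely reproduces Cauchy--Schwarz'' for functions supported in one ball and proposing instead to appeal to the Lichnerowicz rigidity for $\mu_2$. This detour is unnecessary, and your intuition that \eqref{eq-rigidity-infty} is insufficient is wrong. The paper finishes the argument using \eqref{eq-rigidity-infty} alone, but with a test function that is \emph{not} supported in a single ball: assuming $B(p,2)$ contains another vertex $z\neq p$, take $f$ to be $1$ at $z$, $0$ elsewhere on $B(p,2)$, and equal to a constant $c$ on all of $B(q,2)$. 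Then $\Gamma_2^\Omega(f,f)(p)$ depends only on $f|_{B(p,2)}$ and is thus independent of $c$, while
\[
\frac{K^2}{8m}\vv<f,f>_\Omega-\frac{K^2}{8m^2}\vv<f,1>_\Omega^2
=\frac{K^2}{8}\Bigl(\frac{m_z}{m}-\frac{m_z^2}{m^2}\Bigr)-\frac{K^2m_z}{4m}\,c
\]
is linear in $c$ with a strictly negative slope. Letting $c\to\infty$ violates \eqref{eq-rigidity-infty}. Hence both balls are singletons, $\Omega=\{p,q\}$ with $p\not\sim q$ in $\Omega$, and $(G,B)$ is the square. Your proposed route via $E(\Omega,\Omega)=\emptyset$ and Theorem~\ref{thm-rigidity-partial} would also work once this is established, but the missing idea is precisely this two-level test function; you do not need any separate rigidity input for $\mu_2$.
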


At the end of this section, we would like to mention that, although Dirichlet-to-Neumann maps and Steklov eigenvalues on graphs were introduced very recently by Hua-Huang-Wang \cite{HHW} and Hassannezhad-Miclo \cite{HM} independently, there are already a number of works on the topic. See for example \cite{HHW2,HH,Pe,Pe2,SY,SY2}.

The rest of the paper is organized as follows. In section 2, we prove Theorem \ref{thm-Lich-Steklov} and obtained crucial properties for graphs on which equality of \eqref{eq-Lich-Steklov} holds. Moreover, as a simple application of the properties we obtained, we prove Theorem \ref{thm-rigidity-unit} which gives a classification of graphs with unit weight such that equality of \eqref{eq-Lich-Steklov} holds. In Section 3, we deal with the rigidity of \eqref{eq-Lich-Steklov} for general weighted graphs.
\section{Lichnerowicz estimate and rigidity for unit weighted}
In this section, we prove the main results of the paper. Throughout this section, $u\in \R^V$ is used to denote the harmonic extension of an eigenfunction of $\sigma_2$ for $(G,m,w,B)$. More precisely, $u$ is a function on $V$ satisfying:
\begin{equation}\label{eq-u}
\left\{\begin{array}{ll}\Delta u(x)=0& x\in \Omega\\
\frac{\p u}{\p n}(x)=\sigma_2u(x)&x\in B\\
\vv<u,u>_B=1.
\end{array}\right.
\end{equation}

We first prove Theorem \ref{thm-Lich-Steklov}. The argument is rather classical by just integrating \eqref{eq-CD}.
\begin{proof}[Proof of Theorem \ref{thm-Lich-Steklov}]
 By \eqref{eq-integration-by-part}, \eqref{eq-Gamma}, \eqref{eq-normal-der-Lap} and \eqref{eq-u},
\begin{equation}
\begin{split}
&\sum_{x\in V(G)}\Gamma_2(u,u)(x)m_x\\
=&-\sum_{x\in V}\Gamma(\Delta u,u)(x)m_x\\
=&-\vv<d\Delta u,du>\\
=&\vv<\Delta u,\Delta u>\\
=&\vv<\frac{\p u}{\p n},\frac{\p u}{\p n}>_B\\
=&\sigma_2^2\vv<u,u>_B\\
=&\sigma_2^2.
\end{split}
\end{equation}
Furthermore, by \eqref{eq-Gamma},\eqref{eq-normal-der-Lap} and \eqref{eq-u},
\begin{equation}
\begin{split}
&\sum_{x\in V}\left(\frac{1}{n}(\Delta u(x))^2+K\Gamma(u,u)(x)\right)m_x\\
=&\frac{\sigma_2^2}{n}\vv<u,u>_B+K\vv<du,du>\\
=&\frac{\sigma_2^2}{n}-K\vv<\Delta u,u>\\
=&\frac{\sigma_2^2}{n}+K\vv<\frac{\p u}{\p n},u>_B\\
=&\frac{\sigma_2^2}{n}+K\sigma_2.\\
\end{split}
\end{equation}
Finally, because $(G,m,\mu)$ satisfies the Bakry-\'Emery curvature-dimension condition $\CD(K,n)$, one has
\begin{equation}
\sigma_2^2\geq \frac{\sigma_2^2}{n}+K\sigma_2
\end{equation}
 which gives us the conclusion since $\sigma_2>0$.
\end{proof}

We next  discuss the rigidity of \eqref{eq-Lich-Steklov}. We first have the following lemma mainly claiming that $u$ is also an eigenfunction for $\mu_2$ when the equality in \eqref{eq-Lich-Steklov} holds.
\begin{lem}\label{lem-u-1}
Let the notations and assumptions be the same as in Theorem \ref{thm-Lich-Steklov}. Moreover, suppose that $\sigma_2=\frac{nK}{n-1}$.   Then,
 $\mu_2(G)=\frac{nK}{n-1}$ and $u$ is also an eigenfunction for $\mu_2(G)$. Moreover, $u(x)=0$ for $x\in \Omega$.
\end{lem}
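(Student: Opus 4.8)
The plan is to promote the equality $\sigma_2=\frac{nK}{n-1}$ to a pointwise equality in the curvature--dimension inequality, to polarize that equality, and then to recognize $\Delta u+\sigma_2 u$ as a function annihilated by $\Delta$ on all of $V$, hence constant.

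First I would revisit the proof of Theorem \ref{thm-Lich-Steklov}: the only inequality used there is the pointwise bound $\Gamma_2(u,u)(x)\ge\frac1n(\Delta u)^2(x)+K\Gamma(u,u)(x)$, whose two sides were shown to sum, against $m_x$ over $V$, to $\sigma_2^2$ and to $\frac{\sigma_2^2}{n}+K\sigma_2$ respectively. Thus the hypothesis $\sigma_2=\frac{nK}{n-1}$ forces the summed inequality to be an equality, and, the integrand being pointwise nonnegative, we obtain
\[
\Gamma_2(u,u)(x)=\frac1n(\Delta u)^2(x)+K\Gamma(u,u)(x)\qquad\text{for every }x\in V.
\]

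Next, for each fixed $x$ the map $f\mapsto\Gamma_2(f,f)(x)-\frac1n(\Delta f)^2(x)-K\Gamma(f,f)(x)$ is a nonnegative quadratic form on $\R^V$ --- this is exactly $\CD(K,n)$ at $x$ --- which by the previous display vanishes at $u$. Hence $u$ lies in its kernel, and therefore the associated symmetric bilinear form vanishes on $(u,g)$ for every $g\in\R^V$:
\[
\Gamma_2(u,g)(x)=\frac1n\,\Delta u(x)\,\Delta g(x)+K\,\Gamma(u,g)(x)\qquad\text{for all }x\in V,\ g\in\R^V.
\]
I would then multiply this by $m_x$, sum over $x\in V$, and simplify using $\Gamma_2(u,g)=\frac12\big(\Delta\Gamma(u,g)-\Gamma(\Delta u,g)-\Gamma(u,\Delta g)\big)$, the identity $\sum_{x\in V}\Delta(\cdot)(x)\,m_x=0$, the integration-by-parts formula \eqref{eq-integration-by-part}, and the self-adjointness of $\Delta$. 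The left-hand side collapses to $\vv<\Delta^2 u,g>$ and the right-hand side to $\frac1n\vv<\Delta^2 u,g>-K\vv<\Delta u,g>$; since $g$ is arbitrary this yields $\frac{n-1}{n}\Delta^2 u=-K\Delta u$, that is, $\Delta(\Delta u+\sigma_2 u)=0$. As $G$ is connected, $\Delta u+\sigma_2 u\equiv c$ for some constant $c$.

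To finish, I would evaluate $\Delta u+\sigma_2 u=c$ at a boundary vertex, where $\Delta u=-\frac{\p u}{\p n}=-\sigma_2 u$ by \eqref{eq-normal-der-Lap} and \eqref{eq-u}: this gives $c=0$. Evaluating at $x\in\Omega$, where $\Delta u(x)=0$, gives $\sigma_2 u(x)=0$, so $u(x)=0$ since $\sigma_2>0$. Hence $u\equiv0$ on $\Omega$, and then $-\Delta u=\sigma_2 u$ holds on all of $V$ (trivially on $\Omega$, by \eqref{eq-u} on $B$), so $u$ is a nonzero eigenfunction of $-\Delta$ with positive eigenvalue $\sigma_2=\frac{nK}{n-1}$; in particular $u\perp\1$, so $\sigma_2\ge\mu_2(G)$, and together with Theorem \ref{thm-Lich} this forces $\mu_2(G)=\frac{nK}{n-1}=\sigma_2$ and exhibits $u$ as a $\mu_2(G)$-eigenfunction. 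I expect the only genuinely non-routine point to be the polarization: one must notice that equality in the \emph{quadratic} condition $\CD(K,n)$ holding at every vertex can be linearized to a bilinear identity in $(u,g)$, and that summing this identity telescopes --- via \eqref{eq-integration-by-part} and the self-adjointness of $\Delta$ --- to the clean relation $\Delta^2 u=-\sigma_2\Delta u$; the steps before and after it are routine bookkeeping. (A more hands-on alternative, summing the pointwise equality separately over $\Omega$ and over $B$, seems to need structural information about the graph near the boundary that is not yet available at this stage.)
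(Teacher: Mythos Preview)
Your argument is correct, but it takes a genuinely different route from the paper's.

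The paper gives a short variational proof: set $v=u+c$ with $\vv<v,1>=0$, observe that $\vv<u,1>_B=0$ forces $\vv<v,v>_B\ge\vv<u,u>_B$, and use the trivial inequality $\vv<v,v>\ge\vv<v,v>_B$ together with Theorem~\ref{thm-Lich} to obtain the chain
\[
\frac{nK}{n-1}=\sigma_2=\frac{\vv<du,du>}{\vv<u,u>_B}\ge\frac{\vv<dv,dv>}{\vv<v,v>_B}\ge\frac{\vv<dv,dv>}{\vv<v,v>}\ge\mu_2\ge\frac{nK}{n-1}.
\]
Equality throughout then forces $c=0$, $v|_\Omega=0$, and that $v=u$ realizes the Rayleigh quotient for $\mu_2$.

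Your approach instead exploits the \emph{pointwise} equality in $\CD(K,n)$ along $u$, polarizes it (using that a nonnegative quadratic form vanishing at $u$ has $u$ in the kernel of its associated bilinear form), and integrates to deduce the second-order relation $\Delta(\Delta u+\sigma_2 u)=0$. This is longer, but it is self-contained in the sense that it never invokes the Lichnerowicz bound for $\mu_2$ (Theorem~\ref{thm-Lich}) as a black box --- you only appeal to it at the very end to pin down $\mu_2$, and even that could be avoided since you already exhibit $u$ as a nonconstant $-\Delta$-eigenfunction with eigenvalue $\sigma_2$. Your polarized identity is also slightly stronger information than what the paper records at this stage; the paper only re-derives the quadratic equality \eqref{eq-CD-equal} later, at the start of Lemma~\ref{lem-u-2}. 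The trade-off is that the paper's argument is essentially two lines once one notices $\vv<v,v>_B\le\vv<v,v>$.
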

\begin{proof}
 Let $v=u+c$ be such that $\vv<v,1>=0$ where $c$ is some constant. Then,
 \begin{equation}
 \vv<du,du>=\vv<dv,dv>.
 \end{equation}
 Moreover, note that $\vv<u,1>_B=0$, so
 \begin{equation}
 \vv<v,v>_B=\vv<u+c,u+c>_B=\vv<u,u>_B+\vv<c,c>_B\geq \vv<u,u>_B.
 \end{equation}
Then, by the above inequality and Theorem \ref{thm-Lich},
\begin{equation}
\frac{nK}{n-1}=\sigma_2=\frac{\vv<du,du>}{\vv<u,u>_B}\geq\frac{\vv<dv,dv>}{\vv<v,v>_B}\geq\frac{\vv<dv,dv>}{\vv<v,v>}\geq\mu_2\geq \frac{nK}{n-1}.
\end{equation}
So, the inequalities in the expression above must be all equalities which gives us the conclusions of the lemma.
\end{proof}
The following lemma is a key ingredient in the study of rigidity for \eqref{eq-Lich-Steklov}.
\begin{lem}\label{lem-u-2}
Let the notations and assumptions be the same as in Theorem \ref{thm-Lich-Steklov}. Moreover, suppose that $\sigma_2=\frac{nK}{n-1}$.  Then,
\begin{enumerate}
\item for any $x,z\in B$ with $d(x,z)=2$, one has $u(x)=-u(z)$;
\item for any $x\in B$, $u(x)\neq 0$;
\item for any $x\in B$ and $y\in\Omega$, $x\sim y$;
\item $|B|=2$;
\item let $B=\{1,2\}$. Then $m_1=m_2$ and for any $x\in \Omega$, $w_{1x}=w_{2x}$;
\item for each $x\in B$, $\Deg(x)=\frac{nK}{n-1}$;
\item for each $x\in\Omega$, $\Deg_b(x)=\frac{(n+2)K}{n-1}$.
\end{enumerate}
\end{lem}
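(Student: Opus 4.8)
The plan is to extract from the equality $\sigma_2=\frac{nK}{n-1}$ the \emph{pointwise} Bakry–Émery identity, strengthen it by polarization, and then squeeze it with carefully chosen test functions. Running the computation in the proof of Theorem~\ref{thm-Lich-Steklov} backwards, equality in \eqref{eq-Lich-Steklov} forces $\sum_{v\in V}\big(\Gamma_2(u,u)(v)-\tfrac1n(\Delta u)^2(v)-K\Gamma(u,u)(v)\big)m_v=0$, and since every summand is $\geq0$ by $\CD(K,n)$, each vanishes. Because $f\mapsto\Gamma_2(f,f)(v)-\tfrac1n(\Delta f(v))^2-K\Gamma(f,f)(v)$ is a nonnegative quadratic form on $\R^V$ vanishing at $f=u$, its polarization vanishes against $u$, giving
\begin{equation}\label{eq-pol-prop}
\Gamma_2(u,f)(v)=\tfrac1n\Delta u(v)\Delta f(v)+K\Gamma(u,f)(v)\qquad(\forall\,v\in V,\ f\in\R^V).
\end{equation}
From Lemma~\ref{lem-u-1} I use $u|_\Omega\equiv0$, $-\Delta u=\sigma_2 u$ on all of $V$, $\langle u,1\rangle=0$, $\langle u,u\rangle_B=1$; harmonicity of $u$ on $\Omega$ with $u|_\Omega=0$ gives $\sum_{z\in B,\,z\sim y}u(z)w_{yz}=0$ for all $y\in\Omega$. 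Putting $g:=\Gamma(u,u)\geq0$, $\lambda:=\frac{2K}{n-1}$ and using $\Delta u=-\sigma_2 u$ (so $\Gamma_2(u,u)=\tfrac12\Delta g+\sigma_2 g$), the case $f=u$ of \eqref{eq-pol-prop} reads $\Delta g=-\lambda g$ on $\Omega$ and $\tfrac12\Delta g(x)=\tfrac{\sigma_2^2}{n}u(x)^2-\tfrac{K}{n-1}g(x)$ on $B$.

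I would first prove (2) and (6). If $u(x_0)=0$ for some $x_0\in B$, then (all neighbours of $x_0$ lie in $\Omega$, where $u=0$) $g(x_0)=0$, so \eqref{eq-pol-prop} at $x_0$ gives $\Gamma_2(u,u)(x_0)=0$, i.e. $\Delta g(x_0)=0$, forcing $g=0$ on the neighbours of $x_0$; iterating via $\Delta g=-\lambda g$ on $\Omega$ and $g(x)=\tfrac12u(x)^2\Deg(x)$ on $B$, connectedness gives $g\equiv0$, hence $\langle du,du\rangle=0$, hence $u$ constant, hence $u\equiv0$ — impossible; this is (2). For (6): $-\Delta u(x)=u(x)\Deg(x)$ for $x\in B$, so $u(x)\neq0$ forces $\Deg(x)=\sigma_2=\frac{nK}{n-1}$. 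Then $g(x)=\tfrac{\sigma_2}{2}u(x)^2>0$ on $B$, and substituting into the boundary relation above and simplifying with $\sigma_2=\frac{nK}{n-1}$ yields $\Delta g=\lambda g$ on $B$. Since $\Delta g=\lambda g\geq0$ on $B$ and $\Delta g=-\lambda g\leq0$ on $\Omega$ with $g\geq0$, a maximum-principle/propagation argument shows $g>0$ everywhere; in particular every interior vertex has a boundary neighbour.

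Next I feed indicators into \eqref{eq-pol-prop}. With $f=\1_x$, $x\in B$, at a vertex $x'\in B\setminus\{x\}$, a lengthy but routine evaluation of $\Gamma_2(u,\1_x)(x')$ reduces \eqref{eq-pol-prop} to $(u(x)+u(x'))\sum_{y\in\Omega:\,y\sim x,\ y\sim x'}\frac{w_{xy}w_{x'y}}{m_y}=0$. Two distinct boundary neighbours $z,z'$ of an interior vertex $y$ have $y$ as common neighbour, so $u(z)=-u(z')$; together with (2) this forces every interior vertex to have exactly two boundary neighbours, one in $B^+:=\{u>0\}$ and one in $B^-:=\{u<0\}$ (both nonempty since $\langle u,1\rangle=0$). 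For (4): suppose $|B^+|\geq2$ and pick $a_1,a_2\in B^+$, each adjacent to some interior vertex. Set $h:=\1_{a_1}/m_{a_1}-\1_{a_2}/m_{a_2}$, so $\langle h,1\rangle_B=0$; the function equal to $h$ on $B$ and $0$ on $\Omega$ has $\langle dw,dw\rangle=\frac{\Deg(a_1)}{m_{a_1}}+\frac{\Deg(a_2)}{m_{a_2}}=\sigma_2\langle h,h\rangle_B$ by (6), so by the variational characterization of $\sigma_2$ it must be the harmonic extension of $h$; but harmonicity at a vertex adjacent to $a_1$ — whose only $B^+$-neighbour is $a_1$, so $a_2$ is not adjacent to it — forces $h(a_1)w_{ya_1}=0$, a contradiction. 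Hence $|B^+|=|B^-|=1$ and $|B|=2$, which is (4).

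With $B=\{1,2\}$, $1\in B^+$, $2\in B^-$: every interior vertex is adjacent to both of $1,2$, which is (3); the relation $u(1)w_{y1}+u(2)w_{y2}=0$ gives $w_{y2}=\rho w_{y1}$ with $\rho:=-u(1)/u(2)>0$ independent of $y$, and $\langle u,1\rangle=0$ gives $m_2=\rho m_1$. Writing the boundary relation $\tfrac1{m_x}\sum_{y\in\Omega}w_{xy}g(y)=\frac{(n+2)K}{n-1}g(x)$ (i.e. $\Delta g=\lambda g$ on $B$) at $x=1$ and $x=2$ and comparing (their left sides coincide once $w_{y2}=\rho w_{y1}$, $m_2=\rho m_1$) gives $g(1)=g(2)$, hence $u(1)^2=u(2)^2$, hence $u(1)=-u(2)$ — this is (1) — and this forces $\rho=1$, so $m_1=m_2$ and $w_{y1}=w_{y2}$, which is (5). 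Finally $g(y)=\tfrac12u(1)^2\Deg_b(y)$, so $\Delta g=-\lambda g$ on $\Omega$ becomes $\tfrac1{m_y}\sum_{z\in\Omega,z\sim y}w_{yz}(\Deg_b(z)-\Deg_b(y))=\Deg_b(y)\big(\Deg_b(y)-\frac{(n+2)K}{n-1}\big)$, and a maximum/minimum-principle argument on each component of the induced graph on $\Omega$ gives $\Deg_b\equiv\frac{(n+2)K}{n-1}$, which is (7). The main obstacles I anticipate are the bookkeeping in the $\Gamma_2$-computations (both for $\Delta g=\pm\lambda g$ and, especially, for evaluating $\Gamma_2(u,\1_x)(x')$ in the third paragraph) and making airtight the claim in (4) that the competitor vanishing on $\Omega$ realizes the harmonic-extension energy and is therefore harmonic.
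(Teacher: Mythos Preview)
Your proposal is correct. It shares the paper's backbone---the pointwise identity $\Gamma_2(u,u)=\tfrac1n(\Delta u)^2+K\Gamma(u,u)$, the equations $\Delta g=\mp\lambda g$ on $\Omega$ and $B$ for $g:=\Gamma(u,u)$, and the maximum-principle argument for (7)---but differs in organization and in two steps. First, the paper quotes the distance-$2$ identity \eqref{eq-d=2} from \cite{LMP2}, while you recover it by polarizing the equality case and testing with $f=\1_x$; the bookkeeping you worry about does collapse to $(u(x)+u(x'))\sum_y w_{xy}w_{x'y}/m_y=0$ as claimed, so your route is self-contained. Second, and more substantially, the paper proves (3) \emph{before} (4) via a shortest-path argument with \eqref{eq-d=2}, after which (4) is a one-line pigeonhole (three nonzero boundary values cannot be pairwise negatives of one another); you instead prove (4) first by the variational trick with $h=\1_{a_1}/m_{a_1}-\1_{a_2}/m_{a_2}$ extended by zero, and then (3) drops out. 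Your anticipated obstacle there is real but surmountable: writing $w$ for the zero-extension one has $\sigma_2\vv<h,h>_B\leq\vv<du_h,du_h>\leq\vv<dw,dw>=\sigma_2\vv<h,h>_B$, the first inequality by the Rayleigh characterization and the second by the Dirichlet principle, so equality holds throughout and uniqueness of the Dirichlet minimizer forces $w=u_h$. The paper's ordering makes (4) shorter; your ordering avoids the shortest-path case analysis for (3). Your proof of (2), propagating $\{g=0\}$ by connectedness, is a global variant of the paper's localized contradiction (showing simultaneously $\Delta g(x_0)=0$ and $\Delta g(x_0)>0$ at a would-be zero of $u|_B$).
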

\begin{proof} By the proof of Theorem \ref{thm-Lich-Steklov}, because the equality of \eqref{eq-Lich-Steklov} holds,
\begin{equation}\label{eq-CD-equal}
\Gamma_2(u,u)(x)=\frac{1}{n}(\Delta u)^2(x)+K\Gamma(u,u)(x)
\end{equation}
for any $x\in V$. By the same argument as in the proof of Theorem 3.4 (b) in \cite[P. 15]{LMP2}, one has
\begin{equation}\label{eq-d=2}
\frac{u(z)+u(x)}{2}=\frac{\sum_{y\in V}u(y)w_{xy}w_{yz}/m_y}{\sum_{y\in V}w_{xy}w_{yz}/m_y}.
\end{equation}
for any $x,z\in V$ with $d(x,z)=2$. We now come to the proofs of each conclusions.

\noindent (1) By Lemma \ref{lem-u-1}, for any $y\in \Omega$, $u(y)=0$. So, by \eqref{eq-d=2},
$$u(z)+u(x)=0.$$\\
(2) Let $N=\{x\in B\ |\ u(x)\neq 0\}$. We first claim that for any $x\in \Omega$, $d(x,N)=1$. Otherwise, let $x\in \Omega$ with $d(x, N)=l\geq 2$. Let $$N\ni x_0\sim x_1\sim x_2\sim\cdots\sim x_l=x$$
be a shortest path joining $x$ to $N$. It is clear that $x_1\in \Omega$. If $x_2\in B$, then by (1), $x_2\in N$ because $d(x_0,x_2)=2$. However, this is impossible since the path above is the shortest path joining $x$ to $N$. So, $x_2\in \Omega$ and moreover $d(x_0,x_2)=2$. This is also because the path we chosen above is the shortest path joining $x$ to $N$. Then, substituting the two vertices $x_0$ and $x_2$ into \eqref{eq-d=2}, we find that the L.H.S. of \eqref{eq-d=2} is nonzero while the R.H.S. is zero which is a contradiction.

The claim above implies that
$$\Gamma(u,u)(y)>0$$
for any $y\in\Omega$. Now, if there is a $x\in B$ with $u(x)=0$. Then,
$$\Delta u(x)=0$$
and
$$\Gamma(u,u)(x)=0,$$
sicne $u|_\Omega=0$ by Lemma \ref{lem-u-1}. Moreover,
$$\Delta\Gamma(u,u)(x)=\frac{1}{m_x}\sum_{y\in\Omega}(\Gamma(u,u)(y)-\Gamma(u,u)(x))\mu_{xy}>0.$$ However, by \eqref{eq-CD-equal},
\begin{equation}
\begin{split}
\frac{1}{2}\Delta\Gamma(u,u)(x)-\Gamma(\Delta u,u)(x)=\Gamma_2(u,u)(x)=0
\end{split}
\end{equation}
which implies that
$$\Delta\Gamma(u,u)(x)=0$$
since
$$\Gamma(\Delta u,u)(x)=\mu_2\Gamma(u,u)(x)=0.$$ This is a contradiction. So, $B=N$.\\
(3) By the proof of the claim in (2), we know that $d(x,B)=1$ for any $x\in \Omega$. Combining this with (1) and (2), we know that each interior vertex will be adjacent exactly to two boundary vertices. Now, if there are $x\in B$ and $y\in \Omega$ such that $d(x,y)=l\geq 2$.  Let
$$x=x_0\sim x_1\sim x_2\sim\cdots\sim x_l=y.$$
be a shortest path joining $x$ and $y$. It is clear that $x_1\in \Omega$. If $x_2\in \Omega$, then $d(x_0,x_2)=2$ because the path we chosen is shortest. Now, similar as in the proof the claim in (2), we get a contradiction by substituting $x_0$ and $x_2$ into \eqref{eq-d=2}. Therefore $x_2\in B$ and $x_3\in \Omega$. Now $d(x_1,x_3)=2$ and they have only one common neighboring  boundary vertex $x_2$. Substituting $x_1$ and $x_3$ into \eqref{eq-d=2}, one can see that the L.H.S of \eqref{eq-d=2} is zero while the R.H.S. is nonzero. This is a contradiction. \\
(4) If there are more than three boundary vertices, then two of them must of the same sign. This violates (1) because of (2) and (3).\\
(5)  Because
$$u(1)=-u(2)\neq 0$$ and
$$u(1)m_1+u(2)m_2=\vv<u,1>_B=0,$$
we know that
$m_1=m_2$. Moreover, because $u(x)=\Delta u(x)=0$ for $x\in \Omega$ by Lemma \ref{lem-u-1},
\begin{equation}
0=\Delta u(x)=\frac{1}{m_x}(u(1)w_{1x}+u(2)w_{2x}).
\end{equation}
So, $w_{x1}=w_{x2}$.\\
(6) Note that, for any $x\in B$,
 $$\frac{nK}{n-1}u(x)=\frac{\p u}{\p n}(x)=\frac1{m_x}\sum_{y\in \Omega}(u(x)-u(y))w_{xy}=\Deg(x)u(x)$$
since $u(y)=0$ for $y\in\Omega$. So, we get the conclusion because $u(x)\neq 0$. \\
(7) Let $B=\{1,2\}$, $m_1=m_2=m$ and $w_{1x}=w_{2x}=w_x$ for any $x\in \Omega$.  Then, by direct computation,
\begin{equation}\label{eq-Gamma-u}
\Gamma(u,u)(x)=\left\{\begin{array}{ll}\frac{\Deg_b(x)}2u(1)^2&x\in\Omega\\
\frac{\Deg(1)}{2}u(1)^2&x=1,2.
\end{array}\right.
\end{equation}
Moreover, by \eqref{eq-CD-equal} and Lemma \ref{lem-u-1},
\begin{equation}\label{eq-Delta-Gamma-u}
\Delta\Gamma(u,u)(x)=-\frac{2K}{n-1}\Gamma(u,u)(x).
\end{equation}
for any $x\in\Omega.$
Moreover, for any $x\in \Omega$,
\begin{equation}\label{eq-Delta-Omega-Gamma-u}
\begin{split}
&\Delta_\Omega \Gamma(u,u)(x)\\
=&\frac1{m_x}\sum_{y\in\Omega}(\Gamma(u,u)(y)-\Gamma(u,u)(x))w_{xy}\\
=&\frac1{m_x}\sum_{y\in V(G)}(\Gamma(u,u)(y)-\Gamma(u,u)(x))w_{xy}-\frac1{m_x}\sum_{y\in B}(\Gamma(u,u)(y)-\Gamma(u,u)(x))w_{xy}\\
=&\Delta\Gamma(u,u)(x)-(\Deg(1)-\Deg_b(x))\Gamma(u,u)(x)\\
=&\left(\Deg_b(x)-\frac{n+2}{n-1}K\right)\Gamma(u,u)(x)
\end{split}
\end{equation}
where we have used \eqref{eq-Gamma-u}, \eqref{eq-Delta-Gamma-u} and (6).

Let $x_0$ be a maximum point of $\Gamma(u,u)(x)$ in $\Omega$ which is equivalent to say that $x_0$ is a maximum point of $\Deg_b(x)$ in $\Omega$ by \eqref{eq-Gamma-u}. Then,
$$\Delta_\Omega \Gamma(u,u)(x_0)\leq 0$$
which implies that
\begin{equation}
\Deg_b(x)\leq\Deg_b(x_0)\leq\frac{n+2}{n-1}K
\end{equation}
for any $x\in \Omega$, by \eqref{eq-Delta-Omega-Gamma-u}. Similarly, by using a minimum point of $\Gamma(u,u)(x)$ in $\Omega$, one can see that
\begin{equation}
\Deg_b(x)\geq \frac{n+2}{n-1}K
\end{equation}
for any $x\in\Omega$. These give us the conclusion.
\end{proof}
We are now ready to prove Theorem \ref{thm-rigidity-unit}, the rigidity of \eqref{eq-Lich-Steklov} for graphs with unit weight.
\begin{proof}[Proof of Theorem \ref{thm-rigidity-unit}] By Lemma \ref{lem-u-2}, we know that, when the graph is of unit weight,
\begin{equation}\label{eq-Degb=2}
2=|B|=\Deg_b(x)=\frac{(n+2)K}{n-1}
\end{equation}
for any $x\in \Omega$, and
\begin{equation}\label{eq-Deg=Omega}
|\Omega|=\Deg(x)=\frac{nK}{n-1}
\end{equation}
for any $x\in B$. So,
\begin{equation}\label{eq-Omega=n}
|\Omega|=\frac{2n}{n+2}\leq 2.
\end{equation}
Therefore, $|\Omega|=1$ or $2$. When $|\Omega|=1$, by \eqref{eq-Degb=2} and \eqref{eq-Omega=n}, $n=2$ and $K=\frac12$. When $|\Omega|=2$, by \eqref{eq-Degb=2} and \eqref{eq-Omega=n} again, $n=\infty$ and $K=2$. The other conclusions of the theorem are not hard to be checked.
\end{proof}

\section{Rigidity for general weighted graphs}
In this section, we discuss rigidity of \eqref{eq-Lich-Steklov} for general weighted graphs. By Lemma \ref{lem-u-2}, we know that for a connected weighted finite graph $(G,m,w,B)$ with boundary satisfying the Bakry-\'Emery curvature-dimension condition $\CD(K,n)$ with $K>0$ and $n>1$, such that the equality of \eqref{eq-Lich-Steklov} holds, the statements (1),(2),(3),(4) in Theorem \ref{thm-general} must be true. So, without further indication,  we assume the graph $(G,m,w,B)$ satisfy (1),(2),(3),(4) in Theorem \ref{thm-general} throughout this section. More precisely, we assume that
\begin{enumerate}
\item[(A1)] $B=\{1,2\}$ and $m_1=m_2=m$;
\item[(A2)] for any $x\in\Omega$, $w_{1x}=w_{2x}=w_x$;
\item[(A3)] $\Deg(1)=\Deg(2)=\frac{\sum_{x\in \Omega}w_x}{m}=\frac{nK}{n-1}:=\Deg$;
\item[(A4)] for any $x\in\Omega$, $\Deg_b(x)=\frac{(n+2)K}{n-1}:=\Deg_b$.
\end{enumerate}
By (A3) and (A4), one has
\begin{equation}
\Deg_b=\frac{2\sum_{x\in\Omega}w_x}{\sum_{x\in \Omega}m_x}=\frac{\sum_{x\in\Omega}w_x}{m}\frac{2m}{V_\Omega}=\frac{2m\Deg}{V_\Omega}.
\end{equation}
So,
\begin{equation}\label{eq-V-Omega}
V_\Omega=\frac{2m\Deg}{\Deg_b}=\frac{2mn}{n+2}.
\end{equation}

We now compute the expressions of some related quantities in terms of some quantities in the interior.

\begin{lem}\label{lem-Gamma} For any $f,g\in \R^V$,
\begin{equation}
\begin{split}
&\Gamma(f,g)(x)\\
=&\Gamma^\Omega(f,g)(x)+\frac{\Deg_b}{2}f(x)g(x)-\frac{\Deg_b}{4}(f(1)+f(2))g(x)-\frac{\Deg_b}{4}(g(1)+g(2))f(x)\\
&+\frac{\Deg_b}{4}(f(1)g(1)+f(2)g(2)),
\end{split}
\end{equation}
for any $x\in\Omega$,
\begin{equation}
\Gamma(f,g)(1)=\frac{\Deg_b}{4m}\vv<f,g>_\Omega-\frac{\Deg_b}{4m}f(1)\vv<g,1>_\Omega-\frac{\Deg_b}{4m}g(1)\vv<f,1>_\Omega+\frac{\Deg}{2}f(1)g(1)
\end{equation}
and similarly,
\begin{equation}
\Gamma(f,g)(2)=\frac{\Deg_b}{4m}\vv<f,g>_\Omega-\frac{\Deg_b}{4m}f(2)\vv<g,1>_\Omega-\frac{\Deg_b}{4m}g(2)\vv<f,1>_\Omega+\frac{\Deg}{2}f(2)g(2).
\end{equation}

\end{lem}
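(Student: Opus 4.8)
The plan is to unwind the definition \eqref{eq-Gamma} of $\Gamma$ at each vertex separately and split the sum over neighbours according to whether the neighbour lies in $\Omega$ or in $B=\{1,2\}$, then use the assumptions (A1)--(A4) to simplify the boundary contributions. No part of this is deep; the whole statement is a bookkeeping computation once the right substitutions are identified.

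First, for $x\in\Omega$, I would write
\[
\Gamma(f,g)(x)=\frac{1}{2m_x}\sum_{y\in\Omega}(f(x)-f(y))(g(x)-g(y))w_{xy}+\frac{1}{2m_x}\sum_{y\in B}(f(x)-f(y))(g(x)-g(y))w_{xy}.
\]
The first sum is exactly $\Gamma^\Omega(f,g)(x)$, because the induced graph on $\Omega$ carries the inherited vertex measure $m_x$ and the edge weights $w_{xy}$ for $x,y\in\Omega$. In the second sum there are precisely two terms, $y=1$ and $y=2$, with common weight $w_{x1}=w_{x2}=w_x$ by (A2); the key observation is that $\dfrac{w_x}{2m_x}=\dfrac14\Deg_b(x)=\dfrac{\Deg_b}{4}$ by (A4). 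Expanding the two products $(f(x)-f(i))(g(x)-g(i))$ for $i=1,2$ and collecting the coefficients of $f(x)g(x)$, of $f(x)$, of $g(x)$, and the constant terms then gives the first displayed identity of the lemma.

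For $x=1$ (and symmetrically for $x=2$), every neighbour of $1$ lies in $\Omega$: there are no edges inside $B$, and every interior vertex is adjacent to $1$ by statement (1) of Theorem \ref{thm-general}. Hence
\[
\Gamma(f,g)(1)=\frac{1}{2m}\sum_{y\in\Omega}(f(1)-f(y))(g(1)-g(y))w_y.
\]
Here I would use two consequences of (A3)--(A4): first, $w_y=\tfrac12\Deg_b\,m_y$ for each $y\in\Omega$ (again from $\Deg_b(y)=2w_y/m_y$), which converts the $w$-weighted sums $\sum_{y\in\Omega}f(y)g(y)w_y$, $\sum_{y\in\Omega}f(y)w_y$, $\sum_{y\in\Omega}g(y)w_y$ into $\tfrac{\Deg_b}{2}\vv<f,g>_\Omega$, $\tfrac{\Deg_b}{2}\vv<f,1>_\Omega$, $\tfrac{\Deg_b}{2}\vv<g,1>_\Omega$ respectively; and second, $\sum_{y\in\Omega}w_y=m\Deg$ by (A3). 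Substituting these into the expansion of $(f(1)-f(y))(g(1)-g(y))$ and dividing by $2m$ produces the stated formula for $\Gamma(f,g)(1)$, and the computation for $\Gamma(f,g)(2)$ is word-for-word the same with $1$ replaced by $2$.

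I do not expect a genuine obstacle. The only points requiring care are: identifying the interior part of the sum at $x\in\Omega$ with $\Gamma^\Omega$, which uses that the induced graph keeps the measure $m_x$; and the measure-conversion identity $w_y=\tfrac12\Deg_b\,m_y$, which is exactly what turns the $w$-weighted sums at the boundary vertices into the inner products $\vv<\cdot,\cdot>_\Omega$ appearing in the statement. It is also worth checking the consistency $\tfrac12\Deg_b\,V_\Omega=m\Deg$, i.e.\ \eqref{eq-V-Omega}, so that the two natural ways of evaluating $\sum_{y\in\Omega}w_y$ agree.
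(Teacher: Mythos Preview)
Your proposal is correct and follows essentially the same route as the paper's own proof: split the defining sum for $\Gamma$ according to whether the neighbour lies in $\Omega$ or in $B$, identify the interior part with $\Gamma^\Omega$, and use the identities $w_x/m_x=\Deg_b/2$ (from (A4)) and $\sum_{y\in\Omega}w_y=m\Deg$ (from (A3)) to convert the boundary contributions into the stated form. The paper carries out exactly this expansion, so there is nothing to add.
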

\begin{proof}
The proof is just by direct computation using the assumptions (A1),(A2),(A3),(A4). First, we have
\begin{equation}
  \begin{split}
&\Gamma(f,g)(x)\\
=&\frac{1}{2m_x}\sum_{z\in V}(f(z)-f(x))(g(z)-g(x))w_{xz}\\
=&\frac{1}{2m_x}\sum_{z\in \Omega}(f(z)-f(x))(g(z)-g(x))w_{xz}+\frac{1}{2m_x}\sum_{z\in B}(f(z)-f(x))(g(z)-g(x))w_{xz}\\
=&\Gamma^\Omega(f,g)(x)+\frac{w_x}{2m_x}((f(1)-f(x))(g(1)-g(x))+(f(2)-f(x))(g(2)-g(x)))\\
=&\Gamma^\Omega(f,g)(x)+\frac{\Deg_b}{4}((f(1)-f(x))(g(1)-g(x))+(f(2)-f(x))(g(2)-g(x)))\\
=&\Gamma^\Omega(f,g)(x)+\frac{\Deg_b}{2}f(x)g(x)-\frac{\Deg_b}{4}(f(1)+f(2))g(x)-\frac{\Deg_b}{4}(g(1)+g(2))f(x)\\
&+\frac{\Deg_b}{4}(f(1)g(1)+f(2)g(2)).
  \end{split}
\end{equation}
Moreover,
\begin{equation}
\begin{split}
&\Gamma(f,g)(1)\\
=&\frac{1}{2m}\sum_{x\in\Omega}(f(x)-f(1))(g(x)-g(1))w_x\\
=&\frac{1}{2m}\sum_{x\in\Omega}f(x)g(x)w_x-\frac{1}{2m}f(1)\sum_{x\in\Omega}g(x)w_x-\frac{1}{2m}g(1)\sum_{x\in\Omega}f(x)w_x+\frac{f(1)g(1)}{2m}\sum_{x\in\Omega}w_x\\
=&\frac{1}{2m}\sum_{x\in\Omega}f(x)g(x)m_x\frac{w_x}{m_x}-\frac{1}{2m}f(1)\sum_{x\in\Omega}g(x)m_x\frac{w_x}{m_x}-\frac{1}{2m}g(1)\sum_{x\in\Omega}f(x)m_x\frac{w_x}{m_x}+\frac{\Deg}{2}f(1)g(1)\\
=&\frac{\Deg_b}{4m}\vv<f,g>_\Omega-\frac{\Deg_b}{4m}f(1)\vv<g,1>_\Omega-\frac{\Deg_b}{4m}g(1)\vv<f,1>_\Omega+\frac{\Deg}{2}f(1)g(1)
\end{split}
\end{equation}
\end{proof}
By similar computation as in the proof of the last lemma, we have the following expressions for $\Delta f$.
\begin{lem}\label{lem-Delta}
Let $f\in \R^V$. Then,
\begin{equation}
\Delta f(x)=\Delta_\Omega f(x)-\Deg_b f(x)+\frac{\Deg_b}{2}(f(1)+f(2))
\end{equation}
for any $x\in\Omega$,
\begin{equation}
\Delta f(1)=\frac{\Deg_b}{2m}\vv<f,1>_\Omega-\Deg f(1)
\end{equation}
and similarly,
\begin{equation}
\Delta f(2)=\frac{\Deg_b}{2m}\vv<f,1>_\Omega-\Deg f(2).
\end{equation}
\end{lem}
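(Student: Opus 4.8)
The plan is to prove Lemma \ref{lem-Delta} by a direct computation, splitting the defining sum for the Laplacian according to the partition $V=\Omega\sqcup B$ and then invoking the normalizations (A1)--(A4). Two facts make everything collapse to the stated form: first, since $B$ is an independent set in $G$ (condition (i) in the definition of a graph with boundary), every neighbour of a boundary vertex lies in $\Omega$; second, by (A2) and (A4) the ratio $w_x/m_x$ equals the constant $\Deg_b/2$ for every $x\in\Omega$ (because $\Deg_b(x)=\frac1{m_x}(w_{1x}+w_{2x})=2w_x/m_x$), which is exactly what converts weighted edge-sums over $\Omega$ into the inner product $\vv<\cdot,\cdot>_\Omega$.

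For $x\in\Omega$ I would write $\Delta f(x)=\frac1{m_x}\sum_{y\in\Omega}(f(y)-f(x))w_{xy}+\frac1{m_x}\sum_{y\in B}(f(y)-f(x))w_{xy}$; the first term is $\Delta_\Omega f(x)$ by definition, and in the second only $y=1,2$ contribute, with common weight $w_{1x}=w_{2x}=w_x$ by (A2), so it equals $\frac{w_x}{m_x}\left((f(1)-f(x))+(f(2)-f(x))\right)$. Substituting $w_x/m_x=\Deg_b/2$ gives $\frac{\Deg_b}{2}(f(1)+f(2))-\Deg_b f(x)$, and adding the two pieces yields the first formula.

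For $x=1$, independence of $B$ gives $\Delta f(1)=\frac1{m_1}\sum_{y\in\Omega}(f(y)-f(1))w_{1y}=\frac1m\sum_{y\in\Omega}(f(y)-f(1))w_y$ using (A1) and (A2); expanding, $\Delta f(1)=\frac1m\sum_{y\in\Omega}f(y)w_y-\frac{f(1)}m\sum_{y\in\Omega}w_y$. Rewriting $w_y=m_y(w_y/m_y)=\frac{\Deg_b}{2}m_y$ turns the first sum into $\frac{\Deg_b}{2m}\vv<f,1>_\Omega$, while the second is $f(1)\cdot\frac1m\sum_{y\in\Omega}w_y=\Deg f(1)$ by (A3), giving the second formula. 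The case $x=2$ follows verbatim because (A1)--(A4) are symmetric under interchanging the two boundary vertices.

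This is pure bookkeeping and there is no substantive obstacle; the only point requiring care is to keep the two superficially similar identities $w_x/m_x=\Deg_b/2$ and $\frac1m\sum_{y\in\Omega}w_y=\Deg$ straight, and to track which denominator ($m_x$ versus $m_1=m$) appears at each step.
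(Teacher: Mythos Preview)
Your proof is correct and is exactly the direct computation the paper has in mind; the paper itself omits the argument with the remark that it is ``similar as before by direct computation'' (i.e., parallel to the computation for Lemma~\ref{lem-Gamma}). There is nothing to add.
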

\begin{proof}
The proof is similar as before by direct computation, we omit it for simplicity.
\end{proof}
Combining the two lemmas above, we have the following expression for $\Gamma_2(f,f).$
\begin{lem}\label{lem-Gamma2}
(1) Let $x\in \Omega$, for any $f\in\R^V$ with $f(x)=0$,
\begin{equation}
\begin{split}
&\Gamma_2(f,f)(x)\\
=&\Gamma^\Omega_2(f,f)(x)+\Deg_b\Gamma^\Omega(f,f)(x)+\frac{\Deg_b^2}{8m}\vv<f,f>_\Omega\\
&+\frac{\Deg_b}{8}\left(3\Deg f^2(1)+2\Deg_b f(1)f(2)+3\Deg f^2(2)\right)-\frac{\Deg_b^2}{4m}\vv<f,1>_\Omega(f(1)+f(2)).
\end{split}
\end{equation}
(2) For any $f\in\R^V$ with $f(1)=0$,
\begin{equation}
\begin{split}
&\Gamma_2(f,f)(1)\\
=&\frac{\Deg_b}{2m}\vv<df,df>_\Omega+\frac{\Deg_b(3\Deg_b-\Deg)}{8m}\vv<f,f>_\Omega+\frac{\Deg_b^2}{8m^2}\vv<f,1>_\Omega^2\\
&+\frac{\Deg_b\Deg}{8}f^2(2)-\frac{\Deg_b^2}{4m}\vv<f,1>_\Omega f(2)
\end{split}
\end{equation}
and similarly, for any $f\in\R^V$ with $f(2)=0$,
\begin{equation}
\begin{split}
&\Gamma_2(f,f)(2)\\
=&\frac{\Deg_b}{2m}\vv<df,df>_\Omega+\frac{\Deg_b(3\Deg_b-\Deg)}{8m}\vv<f,f>_\Omega+\frac{\Deg_b^2}{8m^2}\vv<f,1>_\Omega^2\\
&+\frac{\Deg_b\Deg}{8}f^2(1)-\frac{\Deg_b^2}{4m}\vv<f,1>_\Omega f(1)
\end{split}
\end{equation}
\end{lem}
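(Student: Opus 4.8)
The plan is to derive Lemma~\ref{lem-Gamma2} purely by substitution, feeding the formulas of Lemma~\ref{lem-Delta} and Lemma~\ref{lem-Gamma} into the definition \eqref{eq-Gamma2} of $\Gamma_2$. Recall that $\Gamma_2(f,f)(x)=\frac12\Delta\Gamma(f,f)(x)-\Gamma(\Delta f,f)(x)$. So for a fixed interior vertex $x$ with $f(x)=0$, the first move is to compute $\Gamma(f,f)$ everywhere (at $x$, at all other interior vertices, and at the two boundary vertices) using Lemma~\ref{lem-Gamma}, then apply the $\Delta$-at-$x$ formula of Lemma~\ref{lem-Delta} to the function $z\mapsto\Gamma(f,f)(z)$. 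In parallel, I compute $g:=\Delta f$ via Lemma~\ref{lem-Delta} and then evaluate $\Gamma(g,f)(x)$ via the interior formula of Lemma~\ref{lem-Gamma}. Assembling $\frac12\Delta\Gamma(f,f)(x)-\Gamma(\Delta f,f)(x)$ and collecting terms should produce the stated identity. The normalization $f(x)=0$ is what kills the many cross terms involving $f(x)$ and makes the bookkeeping tractable; it also means $\Gamma^\Omega(f,f)(x)$, $\Gamma^\Omega_2(f,f)(x)$, $\Delta_\Omega f(x)$ are the "interior" pieces that survive.

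In carrying this out I would organize the computation around which neighbors of $x$ are boundary vertices: by assumptions (A1)--(A4) every interior vertex is joined to exactly $\{1,2\}$ with common weight $w_x$ and $\Deg_b(x)=\Deg_b$ is the same constant for all of $\Omega$, while $\Deg(1)=\Deg(2)=\Deg$. This uniformity is crucial: when I expand $\Delta\Gamma(f,f)(x)=\Delta_\Omega(\Gamma(f,f))(x)-\Deg_b\,\Gamma(f,f)(x)+\frac{\Deg_b}{2}(\Gamma(f,f)(1)+\Gamma(f,f)(2))$, the term $\Delta_\Omega(\Gamma(f,f))(x)$ itself splits (again via Lemma~\ref{lem-Gamma}, restricted to $\Omega$) into $\Delta_\Omega$ applied to $\Gamma^\Omega(f,f)$ plus $\Delta_\Omega$ applied to the quadratic correction $\tfrac{\Deg_b}{2}f(y)^2 - \tfrac{\Deg_b}{4}(f(1)+f(2))f(y)+\text{const}$; since $f(x)=0$ the linear-in-$f(y)$ part contributes $-\tfrac{\Deg_b}{4}(f(1)+f(2))\Delta_\Omega f(x)$ and the quadratic part contributes $\tfrac{\Deg_b}{2}\Delta_\Omega(f^2)(x)=\Deg_b\,\Gamma^\Omega(f,f)(x)+\tfrac{\Deg_b}{2}\cdot\text{(something)}$—here I would use $\Delta_\Omega(f^2)(x)=2f(x)\Delta_\Omega f(x)+2\Gamma^\Omega(f,f)(x)=2\Gamma^\Omega(f,f)(x)$, exactly the place where $f(x)=0$ pays off. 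The half of $\Delta\Gamma$ that is $\frac{\Delta_\Omega(\Gamma^\Omega(f,f))}{2}$ does not simplify further and is precisely the $\Gamma^\Omega_2$-generating term once combined with $-\Gamma^\Omega(\Delta_\Omega f,f)(x)$ coming out of the $\Gamma(\Delta f,f)(x)$ expansion.

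For the boundary-vertex cases (part (2)), the recipe is the same but shorter: at vertex $1$ with $f(1)=0$ one has only interior neighbors, so $\Gamma(f,f)(1)=\tfrac{\Deg_b}{4m}\vv<f,f>_\Omega+\tfrac{\Deg}{2}\cdot 0$ plus a $\vv<f,1>_\Omega$ term that also drops since $f(1)=0$; then $\Gamma_2(f,f)(1)=\tfrac12\Delta\Gamma(f,f)(1)-\Gamma(\Delta f,f)(1)$, and $\Delta$ at vertex $1$ is the single formula $\Delta h(1)=\tfrac{\Deg_b}{2m}\vv<h,1>_\Omega-\Deg\,h(1)$. One needs $\vv<\Gamma(f,f),1>_\Omega$, which requires the interior formula of Lemma~\ref{lem-Gamma} summed against $m_x$ over $\Omega$ — this is where $\sum_{x\in\Omega}m_x=V_\Omega=\tfrac{2mn}{n+2}$ from \eqref{eq-V-Omega} and the identity $\Deg_b V_\Omega=2m\Deg$ enter, turning $\sum_\Omega\Gamma^\Omega(f,f)(x)m_x=\vv<df,df>_\Omega$ and producing the $\vv<df,df>_\Omega$ term in the statement. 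I would also use $\vv<\Delta f,f>_\Omega$-type pairings and the boundary-to-interior coupling $\Delta f(x)=\Delta_\Omega f(x)-\Deg_b f(x)+\tfrac{\Deg_b}{2}(f(1)+f(2))$ with $f(1)=0$ to handle $\Gamma(\Delta f,f)(1)$.

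The main obstacle is not conceptual but combinatorial: keeping the dozen-or-so quadratic monomials in $f(1),f(2),\vv<f,1>_\Omega,\vv<f,f>_\Omega,\vv<df,df>_\Omega$ (and, in the interior case, $\Gamma^\Omega,\Gamma^\Omega_2,\Delta_\Omega f$) correctly tracked through two nested applications of $\Delta$ and $\Gamma$, with the constants $\Deg,\Deg_b,m$ threaded through correctly. I expect the risk of sign or coefficient slips to be concentrated in (i) the cross-term $2\Deg_b f(1)f(2)$ in part (1), which arises only from the interaction of the boundary values inside $\Gamma(\Delta f,f)(x)$ and the $\Gamma(f,f)(1),\Gamma(f,f)(2)$ pieces of $\Delta\Gamma(f,f)(x)$, and (ii) the coefficient $\tfrac{\Deg_b(3\Deg_b-\Deg)}{8m}$ of $\vv<f,f>_\Omega$ in part (2), where contributions of opposite sign partially cancel. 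A safe way to guard against errors is to sanity-check the final identities against the unit-weight special case already used in Lemma~\ref{lem-u-2} and against the $CD(K,n)$ equality profile of $u$ (for which $f=u$, $f|_\Omega=0$, all interior terms vanish), where both sides are independently computable. Since Lemma~\ref{lem-Delta}'s proof was omitted as ``direct computation,'' I would likewise present Lemma~\ref{lem-Gamma2} as a direct but carefully staged substitution, displaying the intermediate expressions for $\Delta\Gamma(f,f)(x)$ and $\Gamma(\Delta f,f)(x)$ before combining.
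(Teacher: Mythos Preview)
Your approach is correct and essentially identical to the paper's: the authors likewise expand $\Gamma_2(f,f)=\tfrac12\Delta\Gamma(f,f)-\Gamma(\Delta f,f)$ at the given vertex by repeatedly substituting Lemmas~\ref{lem-Gamma} and~\ref{lem-Delta}, exploiting $f(x)=0$ to reduce $\Delta_\Omega(f^2)(x)$ to $2\Gamma^\Omega(f,f)(x)$ and, for part~(2), the relation $\Deg_b V_\Omega=2m\Deg$ from~\eqref{eq-V-Omega}. The intermediate displays you propose for $\Delta\Gamma(f,f)$ and $\Gamma(\Delta f,f)$ match the paper's staging almost line for line.
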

\begin{proof}
(1) By Lemma \ref{lem-Gamma} and Lemma \ref{lem-Delta},
\begin{equation}\label{eq-Gamma-2-x-1}
\begin{split}
&\Delta\Gamma(f,f)(x)\\
=&\Delta_\Omega\Gamma(f,f)(x)-\Deg_b \Gamma(f,f)(x)+\frac{\Deg_b}{2}(\Gamma(f,f)(1)+\Gamma(f,f)(2))\\
=&\Delta_\Omega\Gamma^\Omega(f,f)(x)+\frac{\Deg_b}{2}\Delta_\Omega f^2(x)-\frac{\Deg_b}{2}(f(1)+f(2))\Delta_\Omega f(x)\\
&-\Deg_b\left(\Gamma^\Omega(f,f)(x)+\frac{\Deg_b}{4}(f^2(1)+f^2(2))\right)\\
&+\frac{\Deg_b}{2}\left(\frac{\Deg_b}{2m}\vv<f,f>_\Omega-\frac{\Deg_b}{2m}(f(1)+f(2))\vv<f,1>_\Omega+\frac{\Deg}{2}(f^2(1)+f^2(2))\right)\\
=&\Delta_\Omega\Gamma^\Omega(f,f)(x)+\frac{\Deg_b^2}{4m}\vv<f,f>_\Omega-\frac14(\Deg_b-\Deg)\Deg_b(f^2(1)+f^2(2))\\
&-\left(\frac{\Deg_b}{2}\Delta_\Omega f(x)+\frac{\Deg_b^2}{4m}\vv<f,1>_\Omega\right)(f(1)+f(2))
\end{split}
\end{equation}
by noting that $$\Gamma^\Omega(f,f)(x)=\frac{1}{2}\Delta_\Omega f^2(x)-f(x)\Delta_\Omega f(x)=\frac{1}{2}\Delta_\Omega f^2(x)$$
since we have assumed that $f(x)=0$.

Moreover, by Lemma \ref{lem-Gamma} and Lemma \ref{lem-Delta} again,
\begin{equation}\label{eq-Gamma-2-x-2}
\begin{split}
&\Gamma(\Delta f,f)(x)\\
=&\Gamma^\Omega(\Delta f,f)(x)-\frac{\Deg_b}{4}(f(1)+f(2))\Delta f(x)+\frac{\Deg_b}{4}(f(1)\Delta f(1)+f(2)\Delta f(2))\\
=&\Gamma^\Omega(\Delta_\Omega f,f)(x)-\Deg_b\Gamma^\Omega(f,f)(x)-\frac{\Deg_b}{4}(f(1)+f(2))\left(\Delta_\Omega f(x)+\frac{\Deg_b}{2}(f(1)+f(2))\right)\\
&+\frac{\Deg_b}{4}\left(\frac{\Deg_b}{2m}\vv<f,1>_\Omega(f(1)+f(2))-\Deg (f^2(1)+f^2(2))\right)\\
=&\Gamma^\Omega(\Delta_\Omega f,f)(x)-\Deg_b\Gamma^\Omega(f,f)(x)\\
&-\frac{\Deg_b}{8}\left((\Deg_b+2\Deg)f^2(1)+2\Deg_bf(1)f(2)+(\Deg_b+2\Deg)f^2(2)\right)\\
&+\left(\frac{\Deg_b^2}{8m}\vv<f,1>_\Omega-\frac{\Deg_b}{4}\Delta_\Omega f(x)\right)(f(1)+f(2)).
\end{split}
\end{equation}
Hence, by \eqref{eq-Gamma-2-x-1} and\eqref{eq-Gamma-2-x-2},
\begin{equation}
\begin{split}
&\Gamma_2(f,f)(x)\\
=&\frac12\Delta\Gamma(\Delta f,f)(x)-\Delta\Gamma(f,f)(x)\\
=&\Gamma^\Omega_2(f,f)(x)+\Deg_b\Gamma(f,f)(x)+\frac{\Deg_b^2}{8m}\vv<f,f>_\Omega\\
&+\frac{\Deg_b}{8}\left(3\Deg f^2(1)+2\Deg_b f(1)f(2)+3\Deg f^2(2)\right)-\frac{\Deg_b^2}{4m}\vv<f,1>_\Omega(f(1)+f(2)).
\end{split}
\end{equation}
(2) By Lemma \ref{lem-Gamma} and Lemma \ref{lem-Delta},
\begin{equation}\label{eq-Gamma-1-1}
\begin{split}
&\Delta\Gamma(f,f)(1)\\
=&\frac{\Deg_b}{2m}\vv<\Gamma(f,f),1>_\Omega-\Deg \Gamma(f,f)(1)\\
=&\frac{\Deg_b}{2m}\left(\vv<\Gamma^\Omega(f,f),1>_\Omega+\frac{\Deg_b}{2}\vv<f,f>_\Omega-\frac{\Deg_b}{2}f(2)\vv<f,1>_\Omega+\frac{\Deg_bV_\Omega}{4}f^2(2)\right)\\
&-\frac{\Deg\Deg_b}{4m}\vv<f,f>_\Omega\\
=&\frac{\Deg_b}{2m}\vv<df,df>_\Omega+\frac{\Deg_b(\Deg_b-\Deg)}{4m}\vv<f,f>_\Omega+\frac{\Deg_b^2V_\Omega}{8m}f^2(2)-\frac{\Deg_b^2}{4m}\vv<f,1>_\Omega f(2),
\end{split}
\end{equation}
where $\vv<du,du>_\Omega:=\vv<du,du>_{E(\Omega,\Omega)}$.
Moreover,
\begin{equation}\label{eq-Gamma-1-2}
\begin{split}
&\Gamma(\Delta f,f)(1)\\
=&\frac{\Deg_b}{4m}\vv<\Delta f,f>_\Omega-\frac{\Deg_b}{4m}\Delta f(1)\vv<f,1>_\Omega\\
=&\frac{\Deg_b}{4m}\vv<\Delta_\Omega f,f>_\Omega-\frac{\Deg_b^2}{4m}\vv<f,f>_\Omega-\frac{\Deg_b^2}{8m^2}\vv<f,1>_\Omega^2+\frac{\Deg_b^2}{8m}\vv<f,1>_\Omega f(2)\\
=&-\frac{\Deg_b}{4m}\vv<d f,df>_\Omega-\frac{\Deg_b^2}{4m}\vv<f,f>_\Omega-\frac{\Deg_b^2}{8m^2}\vv<f,1>_\Omega^2+\frac{\Deg_b^2}{8m}\vv<f,1>_\Omega f(2).
\end{split}
\end{equation}
Combining \eqref{eq-Gamma-1-1} and \eqref{eq-Gamma-1-2},
\begin{equation}
\begin{split}
&\Gamma_2(f,f)(1)\\
=&\frac12\Delta\Gamma(f,f)(1)-\Gamma(\Delta f,f)(1)\\
=&\frac{\Deg_b}{2m}\vv<df,df>_\Omega+\frac{\Deg_b(3\Deg_b-\Deg)}{8m}\vv<f,f>_\Omega+\frac{\Deg_b^2}{8m^2}\vv<f,1>_\Omega^2\\
&+\frac{\Deg_b\Deg}{8}f^2(2)-\frac{\Deg_b^2}{4m}\vv<f,1>_\Omega f(2)
\end{split}
\end{equation}
where we have used \eqref{eq-V-Omega}.
\end{proof}
We are now ready to compute the curvature of a graph satisfying the assumptions (A1),(A2),(A3),(A4).
\begin{thm}\label{thm-curvature}Let $(G,m,w,B)$ be a connected weighted finite graph with boundary satisfying the assumptions (A1),(A2),(A3) and (A4). Then,
\begin{enumerate}
\item $(G,m,w)$ satisfies the Bakry-\'Emery curvature-dimension condition $\CD(K,n)$ at the boundary vertices.
\item $(G,m,w)$ satisfies the Bakry-\'Emery curvature-dimension condition $\CD(K,n)$ at $x\in\Omega$ if and only if either $n=2$ and $|\Omega|=1$, or $n>2$ and
\begin{equation}
\begin{split}
&\Gamma^\Omega_2(f,f)(x)-\frac{1}{n-2}(\Delta_\Omega f)^2(x)+\frac{3K}{n-1}\Gamma^\Omega(f,f)(x)+\frac{(n+2)^2K^2}{8m(n-1)^2}\vv<f,f>_\Omega\\
&-\frac{(n+2)K}{(n-1)(n-2)m}\vv<f,1>_\Omega\Delta_\Omega f(x)-\frac{n(n+2)^2K^2}{8(n-2)(n-1)^2m^2}\vv<f,1>_\Omega^2\geq0
\end{split}
\end{equation}
for any $f\in \R^\Omega$ with $f(x)=0$. In particular, $(G,m,w)$ satisfies the Bakry-\'Emery curvature-dimension condition $\CD(K,\infty)$ at $x\in\Omega$ if and only if
\begin{equation}
\begin{split}
\Gamma^\Omega_2(f,f)(x)+\frac{K^2}{8m}\vv<f,f>_\Omega
-\frac{K^2}{8m^2}\vv<f,1>_\Omega^2\geq0
\end{split}
\end{equation}
for any $f\in \R^\Omega$ with $f(x)=0$.
\end{enumerate}
\end{thm}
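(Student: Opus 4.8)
The plan is to unwind the definition of the Bakry-\'Emery condition $\CD(K,n)$ at a vertex and feed in the explicit formulas for $\Gamma_2(f,f)$, $\Gamma(f,f)$ and $\Delta f$ obtained in Lemmas \ref{lem-Gamma}, \ref{lem-Delta} and \ref{lem-Gamma2}, together with the normalizations (A1)--(A4) and the identity \eqref{eq-V-Omega}. Recall that $\CD(K,n)$ at $y$ asks $\Gamma_2(f,f)(y)\geq \frac1n(\Delta f)^2(y)+K\Gamma(f,f)(y)$ for \emph{all} $f\in\R^V$; but since $\Gamma_2$, $\Gamma$ and $\Delta$ at $y$ only involve the values of $f$ on $y$ and its neighbours, and since by (A1)--(A2) the neighbourhood structure is completely symmetric, one can normalize $f$ by subtracting a constant. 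For a boundary vertex, say $y=1$, the neighbours are all of $\Omega$, so after subtracting $f(1)$ we may assume $f(1)=0$; for an interior vertex $x$ the neighbours are $1$, $2$ and the $\Omega$-neighbours of $x$, and after subtracting $f(x)$ we may assume $f(x)=0$. In the interior case one moreover checks that the values $f(1),f(2)$ enter the inequality only through $f(1)+f(2)$ and $f(1)^2+f(2)^2$ (inspect the formula in Lemma \ref{lem-Gamma2}(1) and Lemma \ref{lem-Delta}), and that the worst case is $f(1)=f(2)=0$: indeed the $f(1),f(2)$-dependence is a positive-definite quadratic form plus terms already controlled, so minimizing over $f(1),f(2)$ with the interior values fixed and then checking reduces to the stated inequality — I'd handle this by direct completion of squares rather than a general argument.

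For part (1), plug $f(1)=0$ into Lemma \ref{lem-Gamma2}(2), Lemma \ref{lem-Gamma} and Lemma \ref{lem-Delta} to get explicit values of $\Gamma_2(f,f)(1)$, $\Gamma(f,f)(1)$, $\Delta f(1)$ in terms of $\vv<df,df>_\Omega$, $\vv<f,f>_\Omega$, $\vv<f,1>_\Omega$ and $f(2)$, and then substitute $\Deg=\frac{nK}{n-1}$, $\Deg_b=\frac{(n+2)K}{n-1}$. The claim is that the resulting inequality $\Gamma_2(f,f)(1)-\frac1n(\Delta f)^2(1)-K\Gamma(f,f)(1)\geq0$ holds identically (no interior hypothesis needed). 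The key elementary estimate is $\vv<df,df>_\Omega\geq0$ for the $1$-form part, while the remaining terms, after using $V_\Omega=\frac{2mn}{n+2}$, should collapse to a nonnegative quadratic in $(\vv<f,f>_\Omega,\ \frac1m\vv<f,1>_\Omega,\ f(2))$ — I expect it to reduce to a perfect square times a positive coefficient, by Cauchy--Schwarz $\vv<f,1>_\Omega^2\leq V_\Omega\vv<f,f>_\Omega$ controlling the cross term. This is the "curvature is automatically there on the boundary" phenomenon, consistent with Lemma \ref{lem-u-2}.

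For part (2), substitute $f(1)=f(2)=0$ into Lemma \ref{lem-Gamma2}(1), Lemma \ref{lem-Gamma} (with $f(x)=0$), and Lemma \ref{lem-Delta}: then $\Gamma(f,f)(x)=\Gamma^\Omega(f,f)(x)+\frac{\Deg_b}{4m}\vv<f,f>_\Omega$ (using $f(1)=f(2)=0$ and that $\Gamma^\Omega(f,f)(x)=\tfrac12\Delta_\Omega f^2(x)$ when $f(x)=0$ — careful, the $\frac{\Deg_b}{2}f(x)g(x)$ term drops), $\Delta f(x)=\Delta_\Omega f(x)$, and $\Gamma_2(f,f)(x)=\Gamma^\Omega_2(f,f)(x)+\Deg_b\Gamma^\Omega(f,f)(x)+\frac{\Deg_b^2}{8m}\vv<f,f>_\Omega$. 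The inequality $\Gamma_2(f,f)(x)\geq\frac1n(\Delta_\Omega f)^2(x)+K\Gamma(f,f)(x)$ then becomes, after moving the $\frac{\Deg_b^2}{8m}\vv<f,f>_\Omega$ and $\Deg_b\Gamma^\Omega$ terms around,
\[
\Gamma^\Omega_2(f,f)(x)-\frac1n(\Delta_\Omega f)^2(x)+(\Deg_b-K)\Gamma^\Omega(f,f)(x)+\Big(\frac{\Deg_b^2}{8m}-\frac{K\Deg_b}{4m}\Big)\vv<f,f>_\Omega\geq0.
\]
Now the subtle point: the left side has $-\frac1n(\Delta_\Omega f)^2(x)$, but $\Delta_\Omega$ is the Laplacian of the \emph{induced} graph, which has one fewer "direction" than $\Delta$; the extra $\ii$-dimension is used up by the two boundary edges (which contribute $\Delta f(x)=\Delta_\Omega f(x)+0$ here only because $f(1)=f(2)=0$, but in general the boundary edges carry a piece of $\Delta$). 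To convert $-\frac1n$ into $-\frac1{n-2}$ one must redo the reduction \emph{without} setting $f(1),f(2)$ to zero: keep $s:=f(1)+f(2)$ free, write $\Delta f(x)=\Delta_\Omega f(x)-\Deg_b f(x)+\frac{\Deg_b}{2}s=\Delta_\Omega f(x)+\frac{\Deg_b}{2}s$ (since $f(x)=0$), minimize the full $\CD(K,n)$ inequality over $s$ (and over $f(1)^2+f(2)^2$, which only helps), and the minimization over the single scalar $s$ is exactly what turns $\frac1n(\Delta_\Omega f+\frac{\Deg_b}{2}s)^2$ plus the linear-in-$s$ terms from $\Gamma_2$ and $\Gamma$ into $\frac1{n-2}(\Delta_\Omega f)^2$ plus cross terms with $\vv<f,1>_\Omega$ — the "$-2$" appears because completing the square in $s$ against a $\frac1n s^2$ coefficient combined with the $\frac{3\Deg}{8}\Deg_b\cdot\frac{s^2}{?}$-type and $\frac{\Deg_b}{8}\cdot2\Deg_b f(1)f(2)$-type contributions shifts the effective coefficient. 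Finally substitute $\Deg_b=\frac{(n+2)K}{n-1}$, $\Deg=\frac{nK}{n-1}$, simplify, and check: when $n=2$ the coefficient $\frac1{n-2}$ blows up, which forces $\Delta_\Omega f(x)=0$ and $\vv<f,1>_\Omega=0$ for all admissible $f$, i.e. $\Omega=\{x\}$; otherwise one gets \eqref{eq-rigidity-n}, and letting $n\to\infty$ gives \eqref{eq-rigidity-infty}.

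The main obstacle is the $s$-minimization that produces the shift $n\mapsto n-2$: one must be scrupulous about which of the many terms in Lemma \ref{lem-Gamma2}(1) are linear versus quadratic in $s=f(1)+f(2)$ and in $f(1)f(2)$, verify that the $f(1)f(2)$-direction and the $(f(1)-f(2))$-direction are genuinely minimized at $0$ (so only the single variable $s$ survives the optimization), and then carry out the completion of squares cleanly. Everything else is bookkeeping with the explicit formulas already established and the two normalizations $\Deg=\frac{nK}{n-1}$, $\Deg_b=\frac{(n+2)K}{n-1}$.
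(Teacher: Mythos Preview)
Your approach is essentially the same as the paper's: normalize $f$ to vanish at the vertex in question, plug in the explicit formulas from Lemmas \ref{lem-Gamma}--\ref{lem-Gamma2}, and for the interior case minimize over the boundary values to produce the $n\mapsto n-2$ shift. Two points deserve tightening.

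First, the detour through $f(1)=f(2)=0$ is a false start that you yourself retract; the paper goes straight to keeping $s=f(1)+f(2)$ free. More importantly, after substituting $\Deg=\frac{nK}{n-1}$ and $\Deg_b=\frac{(n+2)K}{n-1}$ into the quadratic form in $f(1),f(2)$ coming from Lemma \ref{lem-Gamma2}(1), Lemma \ref{lem-Gamma} and Lemma \ref{lem-Delta}, the coefficient of $f(1)^2$, of $f(2)^2$, and of $2f(1)f(2)$ all become $\frac{\Deg_b^2}{8}(1-\tfrac{2}{n})$, so the dependence on $f(1),f(2)$ is \emph{only} through $s$, not through $f(1)^2+f(2)^2$ separately. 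There is no $(f(1)-f(2))$-direction to worry about; it simply drops out. This is why a one-variable minimization in $s$ suffices.

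Second, you omit the range $1<n<2$. The statement's ``if and only if'' implicitly asserts that $\CD(K,n)$ fails at interior vertices for $n\in(1,2)$. The paper handles this by observing that the coefficient $\frac{\Deg_b^2}{8}(1-\tfrac{2}{n})$ of $s^2$ is then negative, so taking $f|_\Omega\equiv 0$ and $|s|$ large makes the expression negative. Similarly, for $n=2$ the $s^2$ term vanishes and the expression is linear in $s$; the linear coefficient $-\Deg_b\big(\frac{\Deg_b}{4m}\vv<f,1>_\Omega+\tfrac12\Delta_\Omega f(x)\big)$ can be made nonzero whenever $|\Omega|\geq 2$ (take $f=\chi_{\Omega\setminus\{x\}}$), which is the precise reason $|\Omega|=1$ is forced---not quite the ``blow-up of $\tfrac{1}{n-2}$'' heuristic you give.
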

\begin{proof}
(1) Note that $(G,m,w)$ satisfies the Bakry-\'Emery curvature-dimension condition $\CD(K,n)$ at $1$ if and only if
\begin{equation}
\Gamma_2(f,f)(1)-\frac{1}{n}(\Delta f)^2(1)-K\Gamma(f,f)(1)\geq 0
\end{equation}
for any $f\in \R^V$ with $f(1)=0$. By Lemma \ref{lem-Gamma}, Lemma \ref{lem-Delta} and Lemma \ref{lem-Gamma2},
\begin{equation}
\begin{split}
&\Gamma_2(f,f)(1)-\frac{1}{n}(\Delta f)^2(1)-K\Gamma(f,f)(1)\\
=&\frac{\Deg_b}{2m}\vv<df,df>_\Omega+\frac{\Deg_b}{4m}\left(\frac{3\Deg_b-\Deg}{2}-K\right)\vv<f,f>_\Omega+\frac{\Deg_b^2}{4m^2}\left(\frac12-\frac1n\right)\vv<f,1>_\Omega^2\\
&+\frac{\Deg_b\Deg}{8}f^2(2)-\frac{\Deg_b^2}{4m}\vv<f,1>_\Omega f(2)\\
=&\frac{\Deg_b}{2m}\vv<df,df>_\Omega+\frac{\Deg_b}{4m}\left(\frac{3\Deg_b-\Deg}{2}-K\right)\vv<f,f>_\Omega+\frac{\Deg_b^2}{4m^2}\left(\frac12-\frac1n\right)\vv<f,1>_\Omega^2\\
&+\frac{\Deg_b\Deg}{8}\left(f(2)-\frac{\Deg_b}{m\Deg}\vv<f,1>_\Omega\right)^2-\frac{\Deg_b^3}{8m^2\Deg}\vv<f,1>_\Omega^2\\
\geq&\frac{\Deg_b}{4m}\left(\frac{3\Deg_b-\Deg}{2}-K\right)\vv<f,f>_\Omega+\frac{\Deg_b^2}{4m^2}\left(\frac12-\frac1n-\frac{\Deg_b}{2\Deg}\right)\vv<f,1>_\Omega^2\\
=&\frac{\Deg_b}{4m}\left(\frac{4K}{n-1}\vv<f,f>_\Omega-\frac{2\Deg_b}{nm}\vv<f,1>_\Omega^2\right)\\
\geq&\frac{\Deg_b}{4m}\left(\frac{4K}{n-1}\vv<f,f>_\Omega-\frac{2\Deg_b V_\Omega}{nm}\vv<f,f>_\Omega\right)\\
=&\frac{\Deg_b}{4m}\left(\frac{4K}{n-1}\vv<f,f>_\Omega-\frac{4\Deg }{n}\vv<f,f>_\Omega\right)\\
=&0
\end{split}
\end{equation}
where we have used the Cauchy-Schwartz inequality and \eqref{eq-V-Omega}. Similarly, the Bakry-\'Emery curvature-dimension condition $\CD(K,n)$ is also satisfied at $2$.

(2) Note that $(G,m,w)$ satisfies the Bakry-\'Emery curvature-dimension condition $\CD(K,n)$ at $x\in\Omega$ if and only if
\begin{equation}
\Gamma_2(f,f)(x)-\frac{1}{n}(\Delta f)^2(x)-K\Gamma(f,f)(x)\geq 0
\end{equation}
for any $f\in \R^V$ with $f(x)=0$. By Lemma \ref{lem-Gamma}, Lemma \ref{lem-Delta} and Lemma \ref{lem-Gamma2},
\begin{equation}\label{eq-Gamma-2}
\begin{split}
&\Gamma_2(f,f)(x)-\frac{1}{n}(\Delta f)^2(x)-K\Gamma(f,f)(x)\\
=&\Gamma^\Omega_2(f,f)(x)-\frac{1}{n}(\Delta_\Omega f)^2(x)+(\Deg_b-K)\Gamma^\Omega(f,f)(x)+\frac{\Deg_b^2}{8m}\vv<f,f>_\Omega\\
&+\frac{\Deg_b}{8}\bigg(\left(3\Deg-\frac{2\Deg_b}n-2K\right) f^2(1)+\left(2-\frac{4}n\right)\Deg_b f(1)f(2)\\
&+\left(3\Deg-\frac{2\Deg_b}n-2K\right) f^2(2)\bigg)-\left(\frac{\Deg_b^2}{4m}\vv<f,1>_\Omega+\frac{\Deg_b}{n}\Delta_\Omega f(x)\right)(f(1)+f(2))\\
=&\Gamma^\Omega_2(f,f)(x)-\frac{1}{n}(\Delta_\Omega f)^2(x)+\frac{3K}{n-1}\Gamma^\Omega(f,f)(x)+\frac{\Deg_b^2}{8m}\vv<f,f>_\Omega\\
&+\frac{1}{8}\left(1-\frac2n\right)\Deg_b^2(f(1)+f(2))^2-\left(\frac{\Deg_b}{4m}\vv<f,1>_\Omega+\frac{1}{n}\Delta_\Omega f(x)\right)\Deg_b(f(1)+f(2)).
\end{split}
\end{equation}
When $n\in (1,2)$, the expression \eqref{eq-Gamma-2} will be negative if we choose $f\in \R^V$ such that $f|_\Omega\equiv 0$ and $f(1)+f(2)$ is large enough.  So, in this case, the graph will not satisfy the Bakry-\'Emery curvature-dimension $\CD(K,n)$ at $x$. Moreover, when $n=2$ and $|\Omega|\geq 2$, let $f(y)=1$ for any $y\in \Omega\setminus\{x\}$, so that
\begin{equation}
  \frac{\Deg_b}{4m}\vv<f,1>_\Omega+\frac{1}{n}\Delta_\Omega f(x)>0.
\end{equation}
Then, by letting $f(1)+f(2)$ be large enough, the expression \eqref{eq-Gamma-2} will be negative. So, in this case, the graph will not satisfy the Bakry-\'Emery curvature-dimension $\CD(K,n)$ at $x$. Furthermore, it is clear that when $n=2$ and $|\Omega|=1$, the expression \eqref{eq-Gamma-2} is zero. So, in this case, the graph will satisfy the Bakry-\'Emery curvature-dimension $\CD(K,n)$ at $x$.

In the following, we will assume that $n>2$. We can first fix suitable $f(1)$ and $f(2)$ so that
$$\frac{1}{8}\left(1-\frac2n\right)\Deg_b^2(f(1)+f(2))^2-\left(\frac{\Deg_b}{4m}\vv<f,1>_\Omega+\frac{1}{n}\Delta_\Omega f(x)\right)\Deg_b(f(1)+f(2))$$
achieves its minimum
\begin{equation}
-\frac{2n}{n-2}\left(\frac{\Deg_b}{4m}\vv<f,1>_\Omega+\frac{1}{n}\Delta_\Omega f(x)\right)^2.
\end{equation}
So, in this case, the graph satisfies the Bakry-\'Emery curvature-dimension condition $\CD(n,K)$ at $x$ if and only if
\begin{equation}
\begin{split}
&\Gamma^\Omega_2(f,f)(x)-\frac{1}{n}(\Delta_\Omega f)^2(x)+\frac{3K}{n-1}\Gamma^\Omega(f,f)(x)+\frac{\Deg_b^2}{8m}\vv<f,f>_\Omega\\
&-\frac{2n}{n-2}\left(\frac{\Deg_b}{4m}\vv<f,1>_\Omega+\frac{1}{n}\Delta_\Omega f(x)\right)^2\geq 0
\end{split}
\end{equation}
for any $f\in \R^\Omega$ with $f(x)=0$. Simplified the last expression, we arrive at
\begin{equation}
\begin{split}
&\Gamma^\Omega_2(f,f)(x)-\frac{1}{n-2}(\Delta_\Omega f)^2(x)+\frac{3K}{n-1}\Gamma^\Omega(f,f)(x)+\frac{(n+2)^2K^2}{8m(n-1)^2}\vv<f,f>_\Omega\\
&-\frac{(n+2)K}{(n-1)(n-2)m}\vv<f,1>_\Omega\Delta_\Omega f(x)-\frac{n(n+2)^2K^2}{8(n-2)(n-1)^2m^2}\vv<f,1>_\Omega^2\geq0
\end{split}
\end{equation}
for any $f\in \R^\Omega$ with $f(x)=0$. This completes the proof of the Theorem.
\end{proof}
We are now ready to prove Theorem \ref{thm-general}. In fact, it is a direct consequence of Lemma \ref{lem-u-2} and Theorem \ref{thm-curvature}.
\begin{proof}[Proof of Theorem \ref{thm-general}]. If $(G,m,w,B)$ satisfies the Bakry-\'Emery curvature-dimension condition $\CD(K,n)$ with $K>0$ and $n>1$, and the equality of \eqref{eq-Lich-Steklov} holds. Then, by Lemma \ref{lem-u-2}, we know that (1),(2),(3),(4) must be true. Moreover, by Theorem \ref{thm-curvature}, we know that (5) must be true.

Conversely, it clear that for a graph satisfying (1),(2),(3) and (4), one has $\sigma_2=\frac{n}{n-1}K$. Moreover, by Theorem \ref{thm-curvature}, a graph satisfying (1),(2),(3),(4),(5) must satisfy the Bakry-\'Emery curvature condition $\CD(K,n)$.
\end{proof}

We next come to prove Theorem \ref{thm-rigidity-partial}, a rigidity of \eqref{eq-Lich-Steklov} for general weighted graphs with  trivial induced graph on the interior.
\begin{proof}[Proof of Theorem \ref{thm-rigidity-partial}] When $n=2$, there are nothing need to be proved by Theorem \ref{thm-general}. So, in the following, we assume that $n>2$. If $|\Omega|=1$, then there is also nothing need to be proved by Theorem \ref{thm-general}. So, we further assume that $|\Omega|\geq 2$. By Theorem \ref{thm-general}, we know that equality of \eqref{eq-Lich-Steklov} holds if and only if assumptions (A1),(A2),(A3),(A4) are satisfied and
\begin{equation}\label{eq-f}
\frac{(n+2)^2K^2}{8m(n-1)^2}\vv<f,f>_\Omega-\frac{n(n+2)^2K^2}{8(n-2)(n-1)^2m^2}\vv<f,1>_\Omega^2\geq0
\end{equation}
for any $f\in\R^\Omega$ which is vanished at some vertex in $\Omega$. For each $x\in \Omega$, let $f=\chi_{\Omega\setminus\{x\}}$ in  \eqref{eq-f}. We get
\begin{equation}
V_{\Omega\setminus\{x\}}\leq \frac{(n-2)m}{n}.
\end{equation}
Then,
\begin{equation}
m_x=V_\Omega-V_{\Omega\setminus\{x\}}\geq \frac{(n^2+4)m}{n(n+2)}\geq \frac{nm}{n+2}=\frac{V_\Omega}{2}
\end{equation}
by using \eqref{eq-V-Omega}. The equality holds if and only if $n=\infty$. This implies that when $n<\infty$, it is impossible for $|\Omega|\geq 2$. Moreover, when $n=\infty$, then $|\Omega|=2$ and $m_x=m$ for any $x\in\Omega$. This completes the proof of the theorem.
\end{proof}
We next come to prove Theorem \ref{thm-rigidity-normal}, the rigidity of \eqref{eq-Lich-Steklov} for graphs with normalized weight.
\begin{proof}[Proof of Theorem \ref{thm-rigidity-normal}]Because the graph is equipped with a normalized weight, by Lemma \ref{lem-u-2},
\begin{equation}
1=\Deg(x)=\frac{n K}{n-1}
\end{equation}
for any $x\in B$, and moreover,
\begin{equation}
1\geq \Deg_b(x)=\frac{(n+2)K}{n-1}\geq \frac{nK}{n-1}=1
\end{equation}
for any $x\in \Omega$. This means that $n=\infty$, $K=1$ and $\Deg_b(x)=1$ for any $x\in\Omega$. Because $\Deg(x)=1$ for any $x\in V$, we know that $E(\Omega,\Omega)=\emptyset$. Then, by Theorem \ref{thm-rigidity-partial}, we complete the proof of the theorem.
\end{proof}

Finally, we come to prove theorem \ref{thm-connect}.
\begin{proof}[Proof of Theorem \ref{thm-connect}]
(1) Let $B^\Omega_x(2)$ and $B^\Omega_y(2)$ be two disjoint balls of radius $2$ in the induced graph on $\Omega$. Let $f=\chi_{\Omega\setminus B^\Omega_{x}(2)}$.  Then, it is clear that
\begin{equation}
\Gamma^\Omega_2(f,f)(x)=\Delta_\Omega f(x)=\Gamma^\Omega(f,f)(x)=0.
\end{equation}
Substituting this into \eqref{eq-rigidity-n}, we get
\begin{equation}
V_{\Omega\setminus B^\Omega_{x}(2)}\leq\frac{(n-2)m}{n}
\end{equation}
which implies that
\begin{equation}
V_{B^\Omega_{x}(2)}=V_\Omega-V_{\Omega\setminus B^\Omega_{x}(2)}=\frac{(n^2+4)m}{n(n+2)}>\frac{V_\Omega}{2}.
\end{equation}
Similarly,
\begin{equation}
V_{B^\Omega_{y}(2)}>\frac{V_\Omega}{2}.
\end{equation}
However, this impossible since the two balls are disjoint. \\
(2) Let $B^\Omega_x(2)$ and $B^\Omega_y(2)$ be two disjoint balls of radius $2$ in the induced graph on $\Omega$. By the same argument in (1), we know that
\begin{equation}
V_{B^\Omega_x(2)}=V_{B^\Omega_y(2)}=\frac{V_\Omega}{2}=m.
\end{equation}
So $\Omega=B^\Omega_x(2)\cup B^\Omega_y(2)$. We are only left to show that $B^\Omega_x(2)=\{x\}$ and $B^\Omega_y(2)=\{y\}$. Otherwise, let $z$ be another vertex in $B^\Omega_x(2)$. Let $f\in\R^\Omega$ be such that
\begin{equation}
f(\xi)=\left\{\begin{array}{ll}0&\xi\in B^\Omega_x(2)\mbox{ and }\xi\neq z\\
1&\xi=z\\
c&\xi\in B_y^\Omega(2).
\end{array}\right.
\end{equation}
Substituting this into \eqref{eq-rigidity-infty}, we have
\begin{equation}
\Gamma_2^\Omega(f,f)(x)+\frac{K^2}{8}\left(\frac{m_z}{m}-\frac{m_z^2}{m^2}\right)-\frac{K^2m_z}{4m}\cdot c\geq0.
\end{equation}
However, this is impossible when $c$ is large enough since $\Gamma_2^\Omega(f,f)$ depends only on the values of $f$ on $B^\Omega_x(2)$. This completes the proof of the conclusion.

\end{proof}

\end{document}